\documentclass[reqno,11pt]{amsart}
\textwidth=14.5cm \oddsidemargin=1cm \evensidemargin=1cm
\usepackage{amsmath}
\usepackage{amsxtra}
\usepackage{amscd}
\usepackage{amsthm}
\usepackage{amsfonts}
\usepackage{amssymb}
\usepackage{eucal}
\usepackage{scalerel}
\usepackage{verbatim}
\usepackage[matrix,arrow,curve]{xy}

\usepackage{a4wide}

\usepackage[T1]{fontenc}
%%%%%%%%%%%%%%%%%%%%%%%%%%%%%%%%%%%%%%%%%%%%%%%%%%%%%%%%%%%%%%%%%%%%%%%%%%%
% A4 style
%%%%%%%%%%%%%%%%%%%%%%%%%%%%%%%%%%%%%%%%%%%%%%%%%%%%%%%%%%%%%%%%%%%%%%%%%%%
% \oddsidemargin -0.54cm \evensidemargin -0.54cm
% \topmargin -2cm \headheight 1pc \headsep 2pc
%%\footheight 1pc \footskip 2pc
%\textheight 60pc \textwidth 40pc \columnsep 2pc \columnseprule 0pt
%\textheight 55pc \textwidth 40pc \columnsep 2pc \columnseprule 0pt
%%
%\headheight=12pt % to avoid overful

\usepackage{xr-hyper}
\usepackage[
pdftex,
bookmarks=false,
colorlinks=true,
debug=true,
%naturalnames=true,
pdfnewwindow=true]{hyperref}

%\renewcommand{\thesubsection}{\thesection.\roman{subsection}}
%\renewcommand{\thesubsubsection}{\thesection.\roman{subsection}(\alph{subsubsection})}
%\renewcommand{\thesubsection}{\thesection(\roman{subsection})}
%\renewcommand{\thesubsubsection}{\thesection(\roman{subsection}).\alph{subsubsection}}

%Declaration section
\theoremstyle{plain}
\newtheorem{Thm}[equation]{Theorem}
\newtheorem{Cor}[equation]{Corollary}
\newtheorem{Lem}[equation]{Lemma}
\newtheorem{Prop}[equation]{Proposition}
\newtheorem{Conj}[equation]{Conjecture}

\theoremstyle{definition}
\newtheorem{Def}[equation]{Definition}

\theoremstyle{remark}

\newtheorem{Rem}[equation]{Remark}

%Commandsection
\errorcontextlines=0
\numberwithin{equation}{section}
\renewcommand{\rm}{\normalshape}

%Labeling macros
\newif\ifShowLabels
\ShowLabelstrue
\newdimen\theight
\def\TeXref#1{%
    \leavevmode\vadjust{\setbox0=\hbox{{\tt
        \quad\quad  {\small \rm #1}}}%
    \theight=\ht0
    \advance\theight by \lineskip
    \kern -\theight \vbox to
    \theight{\rightline{\rlap{\box0}}%
    \vss}%
    }}%

\ShowLabelsfalse% comment this out if labels should be printed

%Section titles that can be referenced

%Referencing sections and declarations

% New environments for declarations that can be referenced
\newenvironment{thm}[1]%
    { \begin{Thm} \label{T:#1}  \ifShowLabels \TeXref{T:#1} \fi }%
    { \end{Thm} }

\renewcommand{\th}[1]{\begin{thm}{#1} \sl }
\renewcommand{\eth}{\end{thm} }

\newenvironment{lemma}[1]%
    { \begin{Lem} \label{L:#1}  \ifShowLabels \TeXref{L:#1} \fi }%
    { \end{Lem} }
\newcommand{\lem}[1]{\begin{lemma}{#1} \sl}
\newcommand{\elem}{\end{lemma}}

\newenvironment{propos}[1]%
    { \begin{Prop} \label{P:#1}  \ifShowLabels \TeXref{P:#1} \fi }%
    { \end{Prop} }
\newcommand{\prop}[1]{\begin{propos}{#1}\sl }
\newcommand{\eprop}{\end{propos}}

\newenvironment{corol}[1]%
    { \begin{Cor} \label{C:#1}  \ifShowLabels \TeXref{C:#1} \fi }%
    { \end{Cor} }
\newcommand{\cor}[1]{\begin{corol}{#1} \sl }
\newcommand{\ecor}{\end{corol}}

\newenvironment{defeni}[1]%
    { \begin{Def} \label{D:#1}  \ifShowLabels \TeXref{D:#1} \fi }%
    { \end{Def} }
\newcommand{\defe}[1]{\begin{defeni}{#1} \sl }
\newcommand{\edefe}{\end{defeni}}

\newenvironment{remark}[1]%
    { \begin{Rem} \label{R:#1}  \ifShowLabels \TeXref{R:#1} \fi }%
    { \end{Rem} }
\newcommand{\rem}[1]{\begin{remark}{#1}}
\newcommand{\erem}{\end{remark}}

\newenvironment{conjec}[1]%
    { \begin{Conj} \label{Co:#1}  \ifShowLabels \TeXref{Co:#1} \fi }%
    { \end{Conj} }
\renewcommand{\conj}[1]{\begin{conjec}{#1} \sl }
\newcommand{\econj}{\end{conjec}}

\newcommand{\eq}[1]%
    { \ifShowLabels \TeXref{E:#1} \fi
       \begin{equation} \label{E:#1} }
\newcommand{\eeq}{ \end{equation} }

\newcommand{\prf}{ \begin{proof} }
\newcommand{\epr}{ \end{proof} }

\usepackage[dvipsnames]{xcolor}

%%%%%%%%%%%%%%%%%%%%%%%%%%
%                        %
%Finkelberg's newcommands%
%                        %
%%%%%%%%%%%%%%%%%%%%%%%%%%

\newcommand\nc{\newcommand}
\nc{\unl}{\underline}
\nc{\ol}{\overline}
\nc{\on}{\operatorname}
%\nc{\iso}{{\stackrel{\sim}{\longrightarrow}}}

\nc{\BA}{{\mathbb{A}}}
\nc{\BC}{{\mathbb{C}}}
\nc{\BD}{{\mathbb{D}}}
\nc{\BF}{{\mathbb{F}}}
\nc{\BG}{{\mathbb{G}}}
\nc{\BM}{{\mathbb{M}}}
\nc{\BN}{{\mathbb{N}}}
\nc{\BO}{{\mathbb{O}}}
\nc{\BQ}{{\mathbb{Q}}}
\nc{\BP}{{\mathbb{P}}}
\nc{\BR}{{\mathbb{R}}}
\nc{\BZ}{{\mathbb{Z}}}
\nc{\BS}{{\mathbb{S}}}
\nc{\BK}{{\mathbb{K}}}

\nc{\CA}{{\mathcal{A}}} \nc{\CB}{{\mathcal{B}}} \nc{\CalC}{{\mathcal
C}} \nc{\CalD}{{\mathcal D}} \nc{\CE}{{\mathcal{E}}}
\nc{\CF}{{\mathcal{F}}} \nc{\CG}{{\mathcal{G}}}
\nc{\CH}{{\mathcal{H}}} \nc{\CI}{{\mathcal{I}}}
\nc{\CK}{{\mathcal{K}}} \nc{\CL}{{\mathcal{L}}}
\nc{\CM}{{\mathcal{M}}} \nc{\CN}{{\mathcal{N}}}
\nc{\CO}{{\mathcal{O}}} \nc{\CP}{{\mathcal{P}}}
\nc{\CQ}{{\mathcal{Q}}} \nc{\CR}{{\mathcal{R}}}
\nc{\CS}{{\mathcal{S}}} \nc{\CT}{{\mathcal{T}}}
\nc{\CU}{{\mathcal{U}}} \nc{\CV}{{\mathcal{V}}}
\nc{\CW}{{\mathcal{W}}} \nc{\CX}{{\mathcal{X}}}
\nc{\CY}{{\mathcal{Y}}} \nc{\CZ}{{\mathcal{Z}}}

\nc{\fa}{{\mathfrak{a}}}
\nc{\fb}{{\mathfrak{b}}}
\nc{\fg}{{\mathfrak{g}}}
\nc{\fgl}{{\mathfrak{gl}}}
\nc{\fh}{{\mathfrak{h}}}
\nc{\fj}{{\mathfrak{j}}}
\nc{\fl}{{\mathfrak{l}}}
\nc{\fm}{{\mathfrak{m}}}
\nc{\fn}{{\mathfrak{n}}}
\nc{\fu}{{\mathfrak{u}}}
\nc{\fp}{{\mathfrak{p}}}
\nc{\frr}{{\mathfrak{r}}}
\nc{\fs}{{\mathfrak{s}}}
\nc{\ft}{{\mathfrak{t}}}
\nc{\fw}{{\mathfrak{w}}}
\nc{\fz}{{\mathfrak{z}}}

\nc{\fA}{{\mathfrak{A}}}
\nc{\fB}{{\mathfrak{B}}}
\nc{\fD}{{\mathfrak{D}}}
\nc{\fE}{{\mathfrak{E}}}
\nc{\fF}{{\mathfrak{F}}}
\nc{\fG}{{\mathfrak{G}}}
\nc{\fI}{{\mathfrak{I}}}
\nc{\fJ}{{\mathfrak{J}}}
\nc{\fK}{{\mathfrak{K}}}
\nc{\fL}{{\mathfrak{L}}}
\nc{\fM}{{\mathfrak{M}}}
\nc{\fN}{{\mathfrak{N}}}
\nc{\frP}{{\mathfrak{P}}}
\nc{\fQ}{{\mathfrak Q}}
\nc{\fR}{{\mathfrak R}}
\nc{\fS}{{\mathfrak S}}
\nc{\fT}{{\mathfrak{T}}}
\nc{\fU}{{\mathfrak{U}}}
\nc{\fW}{{\mathfrak{W}}}
\nc{\fY}{{\mathfrak{Y}}}
\nc{\fZ}{{\mathfrak{Z}}}

\nc{\ba}{{\mathbf{a}}}
\nc{\bb}{{\mathbf{b}}}
\nc{\bc}{{\mathbf{c}}}
\nc{\bd}{{\mathbf{d}}}
\nc{\be}{{\mathbf{e}}}
\nc{\bi}{{\mathbf{i}}}
\nc{\bj}{{\mathbf{j}}}
\nc{\bn}{{\mathbf{n}}}
\nc{\bp}{{\mathbf{p}}}
\nc{\bq}{{\mathbf{q}}}
\nc{\bu}{{\mathbf{u}}}
\nc{\bv}{{\mathbf{v}}}
\nc{\bw}{{\mathbf{w}}}
\nc{\bx}{{\mathbf{x}}}
\nc{\by}{{\mathbf{y}}}
\nc{\bz}{{\mathbf{z}}}

\nc{\bA}{{\mathbf{A}}}
\nc{\bB}{{\mathbf{B}}}
\nc{\bC}{{\mathbf{C}}}
\nc{\bD}{{\mathbf{D}}}
\nc{\bE}{{\mathbf{E}}}
\nc{\bI}{{\mathbf{I}}}
\nc{\bK}{{\mathbf{K}}}
\nc{\bH}{{\mathbf{H}}}
\nc{\bM}{{\mathbf{M}}}
\nc{\bN}{{\mathbf{N}}}
\nc{\bO}{{\mathbf{O}}}
\nc{\bQ}{{\mathbf Q}}
\nc{\bS}{{\mathbf{S}}}
\nc{\bT}{{\mathbf{T}}}
\nc{\bV}{{\mathbf{V}}}
\nc{\bW}{{\mathbf{W}}}
\nc{\bX}{{\mathbf{X}}}
\nc{\bP}{{\mathbf{P}}}
\nc{\bY}{{\mathbf{Y}}}
\nc{\bZ}{{\mathbf{Z}}}

\nc{\sA}{{\mathsf{A}}}
\nc{\sB}{{\mathsf{B}}}
\nc{\sC}{{\mathsf{C}}}
\nc{\sD}{{\mathsf{D}}}
\nc{\sF}{{\mathsf{F}}}
\nc{\sK}{{\mathsf{K}}}
\nc{\sM}{{\mathsf{M}}}
\nc{\sO}{{\mathsf{O}}}
\nc{\sQ}{{\mathsf{Q}}}
\nc{\sP}{{\mathsf{P}}}
\nc{\sT}{{\mathsf{T}}}
\nc{\sV}{{\mathsf{V}}}
\nc{\sW}{{\mathsf{W}}}
\nc{\sX}{{\mathsf{X}}}
\nc{\sZ}{{\mathsf{Z}}}
\nc{\sU}{{\mathsf{U}}}
\nc{\sS}{{\mathsf{S}}}

\nc{\sfb}{{\mathsf{b}}}
\nc{\sfc}{{\mathsf{c}}}
\nc{\sd}{{\mathsf{d}}}
\nc{\sg}{{\mathsf{g}}}
\nc{\sk}{{\mathsf{k}}}
\nc{\sfl}{{\mathsf{l}}}
\nc{\sfp}{{\mathsf{p}}}
\nc{\sr}{{\mathsf{r}}}
\nc{\st}{{\mathsf{t}}}
\nc{\sfu}{{\mathsf{u}}}
\nc{\sw}{{\mathsf{w}}}
\nc{\sz}{{\mathsf{z}}}
\nc{\sx}{{\mathsf{x}}}
\nc{\se}{{\mathsf{e}}}
\nc{\sfv}{{\mathsf{v}}}
\nc{\sff}{{\mathsf{f}}}

\nc{\bLambda}{{\boldsymbol{\Lambda}}}
\nc{\vv}{{\boldsymbol{v}}}
\nc{\Fl}{{{\mathcal F}\ell}}
\nc{\Gr}{{\on{Gr}}}
\nc{\CHH}{{\CH\!\!\CH}}
\nc{\lambdavee}{{\lambda^{\!\scriptscriptstyle\vee}}}
\nc{\alphavee}{\alpha^{\!\scriptscriptstyle\vee}}
\nc{\rhovee}{{\rho^{\!\scriptscriptstyle\vee}}}
\newcommand\iso{\,\vphantom{j^{X^2}}\smash{\overset{\sim}{\vphantom{\rule{0pt}{0.20em}}\smash{\longrightarrow}}}\,}
\nc{\oQM}{\vphantom{j^{X^2}}\smash{\overset{\circ}{\vphantom{\vstretch{0.7}{A}}\smash{\QM}}}}
\nc{\oZ}{{}^\dagger\!\vphantom{j^{X^2}}\smash{\overset{\circ}{\vphantom{\vstretch{0.7}{A}}\smash{Z}}}}
\nc{\odZ}{{}^\dagger\!\vphantom{j^{X^2}}\smash{\overset{\circ}{\vphantom{\vstretch{0.7}{A}}\smash{\mathfrak Z}}}^{c',c}}
\nc{\bdZ}{{}^\dagger\!\vphantom{j^{X^2}}\smash{\overset{\bullet}{\vphantom{\vstretch{0.7}{A}}\smash{\mathfrak Z}}}^{c',c}}
\nc{\oS}{\vphantom{j^{X^2}}\smash{\overset{\circ}{\vphantom{\vstretch{0.7}{A}}\smash{S}}}}
\nc{\buM}{\vphantom{j^{X^2}}\smash{\overset{\bullet}{\vphantom{\vstretch{0.7}{A}}\smash{M}}}}
\nc{\dW}{{}^\dagger\ol\CW{}}
\nc{\hW}{{}^\dagger\hat\CW{}}
\nc{\wW}{{}^\dagger\widetilde\CW{}}
\nc{\dZ}{{}^\dagger\!\fZ^{c',c}}
\nc{\dZc}{{}^\dagger\!\fZ^{c,c}}
\nc{\tZ}{{}^\dagger\!\tilde{Z}{}}
\nc{\hZ}{{}^\dagger\!\hat{Z}{}}

%
%
%End of Finkelberg's newcommands
%
%

\nc{\ssl}{\mathfrak{sl}} \nc{\gl}{\mathfrak{gl}}
\nc{\wt}{\widetilde} \nc{\Sym}{\mathrm{Sym}} \nc{\Res}{\mathrm{Res}}
\nc{\sE}{{\mathsf{E}}} \nc{\bs}{{\mathbf{s}}}
\nc{\trig}{\mathrm{trig}} \nc{\rat}{\mathrm{rat}}
\nc{\sign}{\mathrm{sign}} \nc{\sL}{{\mathsf{L}}}
\nc{\fv}{{\mathfrak{v}}} \nc{\ad}{\mathrm{ad}}
\nc{\spsi}{{\mathsf{\psi}}} \nc{\sh}{{\mathsf{h}}}
\nc{\rtt}{\mathrm{rtt}} \nc{\qdet}{\mathrm{qdet}} \nc{\pt}{{\operatorname{pt}}}
\nc{\M}{\mathrm{M}} \nc{\Ker}{\mathrm{Ker}} \nc{\ssc}{\mathrm{sc}}
\nc{\loc}{\mathrm{loc}} \nc{\fra}{\mathrm{frac}}
\nc{\ddj}{\mathrm{DJ}} \nc{\End}{\mathrm{End}} \nc{\ev}{\mathrm{ev}}
\nc{\GL}{\mathrm{GL}}
\nc{\Or}{\mathrm{Or}}

    %%%%%%%%%%%%%%%%%%%%%%%%%%%%%%%%%%%%%%%%%%%%%%%%%%%%%%%%%%%%%%%%%%%%%%%%%%%%%%%
    %%%%%%%%%%%%%%%%%%%%%%%%%%%%%%%%%%%%%%%%%%%%%%%%%%%%%%%%%%%%%%%%%%%%%%%%%%%%%%%
    %%%%%%%%%%%%%%%%%%%%%%%%%%%%%%% BEGINNING %%%%%%%%%%%%%%%%%%%%%%%%%%%%%%%%%%%%%
    %%%%%%%%%%%%%%%%%%%%%%%%%%%%%%%%%%%%%%%%%%%%%%%%%%%%%%%%%%%%%%%%%%%%%%%%%%%%%%%
    %%%%%%%%%%%%%%%%%%%%%%%%%%%%%%%%%%%%%%%%%%%%%%%%%%%%%%%%%%%%%%%%%%%%%%%%%%%%%%%

\begin{document}
\title[Duality of Lusztig and RTT integral forms]
{Duality of Lusztig and RTT integral forms of $U_\vv(L\ssl_n)$}

\author[Alexander Tsymbaliuk]{Alexander Tsymbaliuk}
\address{A.T.: Yale University, Department of Mathematics, New Haven, CT 06511, USA}
\email{sashikts@gmail.com}

\begin{abstract}
We show that the Lusztig integral form is dual to the RTT integral form of the type $A$
quantum loop algebra with respect to the new Drinfeld pairing, by utilizing the shuffle
algebra realization of the former and the PBWD bases of the latter obtained in~\cite{t}.
\end{abstract}
\maketitle

    %%%%%%%%%%%%%%%%%%%%%%%%%%%%%%%%%%%%%%%%%%%%%%%%%%%%%%%%%%%%%%%%%%%%%%%%%%%%%%%
    %%%%%%%%%%%%%%%%%%%%%%%%%%%%%%%%%%%%%%%%%%%%%%%%%%%%%%%%%%%%%%%%%%%%%%%%%%%%%%%
    %%%%%%%%%%%%%%%%%%%%%%%%%%%%%% INTRODUCTION %%%%%%%%%%%%%%%%%%%%%%%%%%%%%%%%%%%
    %%%%%%%%%%%%%%%%%%%%%%%%%%%%%%%%%%%%%%%%%%%%%%%%%%%%%%%%%%%%%%%%%%%%%%%%%%%%%%%
    %%%%%%%%%%%%%%%%%%%%%%%%%%%%%%%%%%%%%%%%%%%%%%%%%%%%%%%%%%%%%%%%%%%%%%%%%%%%%%%

\section{Introduction}

    %%%%%%%%%%%%%%%%%%%%%%%%%%%%%%%%%%%%%%%%%%%%%%%%%%%%%%%%%%%%%%%%%%%%%%%%%%%%%%%
    %%%%%%%%%%%%%%%%%%%%%%%%%%%%%%%%%%%% SUMMARY %%%%%%%%%%%%%%%%%%%%%%%%%%%%%%%%%%
    %%%%%%%%%%%%%%%%%%%%%%%%%%%%%%%%%%%%%%%%%%%%%%%%%%%%%%%%%%%%%%%%%%%%%%%%%%%%%%%

\subsection{Summary}
\

For a simple finite-dimensional Lie algebra $\fg$, the quantum function algebra is
dual to the Lusztig form $\sU_\vv(\fg)$ of the quantum group of $\fg$. For $\fg=\ssl_n$,
this is reflected by the duality between the Lusztig and the RTT integral forms of
$U_\vv(\ssl_n)$ with respect to the Drinfeld-Jimbo pairing. In this short note,
we establish an affine version of the above result for $\ssl_n$ replaced with
$\widehat{\ssl}_n$ and the Drinfeld-Jimbo pairing replaced with the new Drinfeld pairing.

    %%%%%%%%%%%%%%%%%%%%%%%%%%%%%%%%%%%%%%%%%%%%%%%%%%%%%%%%%%%%%%%%%%%%%%%%%%%%%%%
    %%%%%%%%%%%%%%%%%%%%%%%%%%%%%%%% Paper's Outline %%%%%%%%%%%%%%%%%%%%%%%%%%%%%%
    %%%%%%%%%%%%%%%%%%%%%%%%%%%%%%%%%%%%%%%%%%%%%%%%%%%%%%%%%%%%%%%%%%%%%%%%%%%%%%%

\subsection{Outline of the paper}
\

$\bullet$
In Section~\ref{sec Classical quantuma affine}, we recall the quantum loop
(quantum affine with the trivial central charge) algebra $U_\vv(L\ssl_n)$
as well as its two integral forms: $\fU_\vv(L\ssl_n)$ (naturally arising in
the RTT presentation of~\cite{frt}) and $\sU_\vv(L\ssl_n)$
(Lusztig form defined in the Drinfeld-Jimbo presentation).
Both integral forms posses triangular decompositions, see
Propositions~\ref{Triangular for RTT integral form},~\ref{Triangular for Lusztig integral form},
generalizing the one for $U_\vv(L\ssl_n)$ of Proposition~\ref{Triangular decomposition}.
We also recall our constructions of the PBWD (Poincar\'{e}-Birkhoff-Witt-Drinfeld)
bases for the ``positive'' and ``negative'' subalgebras of both integral forms
established in~\cite{t}, see
Theorems~\ref{PBWD for RTT integral form},~\ref{PBWD for Lusztig integral form}.
Finally, in Section~\ref{ssec new Drinfeld pairing}, we recall
the new Drinfeld topological Hopf algebra structure and
the new Drinfeld pairing on $U_\vv(L\ssl_n)$.

$\bullet$
In Section~\ref{sec shuffle algebra realizations}, we recall the shuffle algebra
$S^{(n)}$, its two integral forms, and the shuffle algebra realizations of the
``positive'' subalgebras $U^>_\vv(L\ssl_n)$, see Theorem~\ref{shuffle isomorphism}
(first established in~\cite{n}), and of $\sU^>_\vv(L\ssl_n), \fU^>_\vv(L\ssl_n)$,
see Theorem~\ref{shuffle Grojnowski isomorphism} and Remark~\ref{shuffle RTT isomorphism},
established in~\cite{t}. Finally, we enlarge $S^{(n)}$ to the extended shuffle algebra
$S^{(n),\geq}$ by adjoining Cartan generators satisfying~(\ref{cartan shuffle action}),
thus obtaining the shuffle algebra realization~(\ref{extended shuffle isomorphism}) of
$U^\geq_\vv(L\ssl_n)$, and recall the formulas~(\ref{coproduct Cartan},~\ref{coproduct shuffle})
for the new Drinfeld coproduct on it, cf.~\cite[Proposition 3.5]{n}.

$\bullet$
In Section~\ref{sec main result}, we prove that the integral form $\sU_\vv(L\ssl_n)$
is dual to $\fU_\vv(L\ssl_n)$ with respect to the new Drinfeld pairing, see
Theorem~\ref{Main Theorem} and Remark~\ref{Opposite duality}, which constitutes
the main result of this note. Our proof is crucially based on the shuffle realizations of
Section~\ref{sec shuffle algebra realizations} as well as utilizes the entire family
of the PBWD bases of $\fU_\vv(L\ssl_n)$ of Theorem~\ref{PBWD for RTT integral form},
see Remark~\ref{importance of all decompositions}.

    %%%%%%%%%%%%%%%%%%%%%%%%%%%%%%%%%%%%%%%%%%%%%%%%%%%%%%%%%%%%%%%%%%%%%%%%%%%%%%%
    %%%%%%%%%%%%%%%%%%%%%%%%%%%%%%%% ACKNOWLEDGMENTS %%%%%%%%%%%%%%%%%%%%%%%%%%%%%%
    %%%%%%%%%%%%%%%%%%%%%%%%%%%%%%%%%%%%%%%%%%%%%%%%%%%%%%%%%%%%%%%%%%%%%%%%%%%%%%%

\subsection{Acknowledgments}
\

I am indebted to Pavel Etingof, Boris Feigin, Michael Finkelberg, Andrei Negu\c{t}
for stimulating discussions; to anonymous referee for useful suggestions.
This note was prepared during the author's visit to IH\'{E}S (Bures-sur-Yvette, France)
in the summer $2019$, sponsored by the European Research Council under
the European Union's Horizon $2020$ research and innovation program
(QUASIFT grant agreement $677368$); I am grateful to V.~Pestun for his invitation.

I gratefully acknowledge NSF Grants DMS-$1821185$, DMS-$2001247$, and DMS-$2037602$.

    %%%%%%%%%%%%%%%%%%%%%%%%%%%%%%%%%%%%%%%%%%%%%%%%%%%%%%%%%%%%%%%%%%%%%%%%%%%%%%%
    %%%%%%%%%%%%%%%%%%%%%%%%%%%%%%%%%%%%%%%%%%%%%%%%%%%%%%%%%%%%%%%%%%%%%%%%%%%%%%%
    %%%%%%%%%%%%%%%%%%%% Recollection on quantum groups %%%%%%%%%%%%%%%%%%%%%%%%%%%
    %%%%%%%%%%%%%%%%%%%%%%%%%%%%%%%%%%%%%%%%%%%%%%%%%%%%%%%%%%%%%%%%%%%%%%%%%%%%%%%
    %%%%%%%%%%%%%%%%%%%%%%%%%%%%%%%%%%%%%%%%%%%%%%%%%%%%%%%%%%%%%%%%%%%%%%%%%%%%%%%

\section{Quantum loop algebra $U_\vv(L\ssl_n)$ and its integral forms}
\label{sec Classical quantuma affine}

    %%%%%%%%%%%%%%%%%%%%%%%%%%%%%%%%%%%%%%%%%%%%%%%%%%%%%%%%%%%%%%%%%%%%%%%%%
    %%%%%%%%%%%%%%%%%%%%% Quantum affine algebra of sl_n %%%%%%%%%%%%%%%%%%%%
    %%%%%%%%%%%%%%%%%%%%%%%%%%%%%%%%%%%%%%%%%%%%%%%%%%%%%%%%%%%%%%%%%%%%%%%%%

\subsection{Quantum loop algebra $U_\vv(L\ssl_n)$}\label{ssec Drinfeld affine sl_n}
\

Let $I=\{1,\ldots,n-1\}$, $(c_{ij})_{i,j\in I}$ be the Cartan matrix of $\ssl_n$,
and $\vv$ be a formal variable. Following~\cite{d1}, define the quantum loop algebra
of $\ssl_n$ (in the new Drinfeld presentation), denoted by $U_\vv(L\ssl_n)$,
to be the associative $\BC(\vv)$-algebra generated by
  $\{e_{i,r},f_{i,r},\psi^\pm_{i,\pm s}\}_{i\in I}^{r\in \BZ, s\in \BN}$
with the following defining relations:
\begin{equation}\label{Aff 1}
  [\psi_i^\epsilon(z),\psi_j^{\epsilon'}(w)]=0,\
  \psi^\pm_{i,0}\cdot \psi^\mp_{i,0}=1,
\end{equation}
\begin{equation}\label{Aff 2}
  (z-\vv^{c_{ij}}w)e_i(z)e_j(w)=(\vv^{c_{ij}}z-w)e_j(w)e_i(z),
\end{equation}
\begin{equation}\label{Aff 3}
  (\vv^{c_{ij}}z-w)f_i(z)f_j(w)=(z-\vv^{c_{ij}}w)f_j(w)f_i(z),
\end{equation}
\begin{equation}\label{Aff 4}
  (z-\vv^{c_{ij}}w)\psi_i^\epsilon(z)e_j(w)=(\vv^{c_{ij}}z-w)e_j(w)\psi_i^\epsilon(z),
\end{equation}
\begin{equation}\label{Aff 5}
  (\vv^{c_{ij}}z-w)\psi^\epsilon_i(z)f_j(w)=(z-\vv^{c_{ij}}w)f_j(w)\psi^\epsilon_i(z),
\end{equation}
\begin{equation}\label{Aff 6}
  [e_i(z),f_j(w)]=
  \frac{\delta_{ij}}{\vv-\vv^{-1}}\delta\left(\frac{z}{w}\right)\left(\psi^+_i(z)-\psi^-_i(z)\right),
\end{equation}
\begin{equation}\label{Aff 7}
\begin{split}
  & e_i(z)e_j(w)=e_j(w)e_i(z)\ \mathrm{if}\ c_{ij}=0,\\
  & [e_i(z_1),[e_i(z_2),e_j(w)]_{\vv^{-1}}]_{\vv}+
    [e_i(z_2),[e_i(z_1),e_j(w)]_{\vv^{-1}}]_{\vv}=0 \ \mathrm{if}\ c_{ij}=-1,
\end{split}
\end{equation}
\begin{equation}\label{Aff 8}
\begin{split}
  & f_i(z)f_j(w)=f_j(w)f_i(z)\ \mathrm{if}\ c_{ij}=0,\\
  & [f_i(z_1),[f_i(z_2),f_j(w)]_{\vv^{-1}}]_{\vv}+
    [f_i(z_2),[f_i(z_1),f_j(w)]_{\vv^{-1}}]_{\vv}=0 \ \mathrm{if}\ c_{ij}=-1,
\end{split}
\end{equation}
where $[a,b]_x:=ab-x\cdot ba$ and the generating series are defined as follows:
\begin{equation*}
  e_i(z):=\sum_{r\in \BZ}{e_{i,r}z^{-r}},\
  f_i(z):=\sum_{r\in \BZ}{f_{i,r}z^{-r}},\
  \psi_i^{\pm}(z):=\sum_{s\geq 0}{\psi^\pm_{i,\pm s}z^{\mp s}},\
  \delta(z):=\sum_{r\in \BZ}{z^r}.
\end{equation*}

Let
  $U^{<}_\vv(L\ssl_n), U^{>}_\vv(L\ssl_n), U^{0}_\vv(L\ssl_n)$
be the $\BC(\vv)$-subalgebras of $U_\vv(L\ssl_n)$ generated
respectively by
  $\{f_{i,r}\}_{i\in I}^{r\in \BZ},
   \{e_{i,r}\}_{i\in I}^{r\in \BZ},
   \{\psi^\pm_{i,\pm s}\}_{i\in I}^{s\in \BN}$.
The following is standard (see e.g.~\cite[Theorem 2]{h}):

\begin{Prop}\label{Triangular decomposition}
(a) (Triangular decomposition of $U_\vv(L\ssl_n)$)
The multiplication map
\begin{equation*}
  m\colon
  U^{<}_\vv(L\ssl_n)\otimes_{\BC(\vv)} U^{0}_\vv(L\ssl_n)\otimes_{\BC(\vv)} U^{>}_\vv(L\ssl_n)
  \longrightarrow U_\vv(L\ssl_n)
\end{equation*}
is an isomorphism of $\BC(\vv)$-vector spaces.

\noindent
(b) The algebra $U^{>}_\vv(L\ssl_n)$ (resp.\ $U^{<}_\vv(L\ssl_n)$ and
$U^{0}_\vv(L\ssl_n)$) is isomorphic to the associative $\BC(\vv)$-algebra
generated by $\{e_{i,r}\}_{i\in I}^{r\in \BZ}$
(resp.\ $\{f_{i,r}\}_{i\in I}^{r\in \BZ}$ and $\{\psi^\pm_{i,\pm s}\}_{i\in I}^{s\in \BN}$)
with the defining relations~(\ref{Aff 2}, \ref{Aff 7})
(resp.~(\ref{Aff 3}, \ref{Aff 8}) and~(\ref{Aff 1})).
\end{Prop}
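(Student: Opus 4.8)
Proposition~\ref{Triangular decomposition} is classical, so one may simply invoke~\cite{h}; here is the approach I would take to make the argument self-contained. Introduce the associative $\BC(\vv)$-algebras $\CA^{>},\CA^{0},\CA^{<}$ presented by the generators $\{e_{i,r}\}_{i\in I}^{r\in\BZ}$, $\{\psi^\pm_{i,\pm s}\}_{i\in I}^{s\in\BN}$, $\{f_{i,r}\}_{i\in I}^{r\in\BZ}$ and the relations (\ref{Aff 2}, \ref{Aff 7}), (\ref{Aff 1}), and (\ref{Aff 3}, \ref{Aff 8}) respectively; note that $\CA^{0}$ is just the commutative algebra $\BC(\vv)[(\psi^+_{i,0})^{\pm 1},\{\psi^\pm_{i,\pm s}\}_{i\in I}^{s\geq 1}]$. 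Since the corresponding generators of $U_\vv(L\ssl_n)$ satisfy these relations, there are surjective $\BC(\vv)$-algebra homomorphisms $\pi^{>}\colon\CA^{>}\twoheadrightarrow U^{>}_\vv(L\ssl_n)$, $\pi^{0}\colon\CA^{0}\twoheadrightarrow U^{0}_\vv(L\ssl_n)$, $\pi^{<}\colon\CA^{<}\twoheadrightarrow U^{<}_\vv(L\ssl_n)$. Both parts follow at once if we show that the composite
\[
  \CA^{<}\otimes_{\BC(\vv)}\CA^{0}\otimes_{\BC(\vv)}\CA^{>}
  \xrightarrow{\,\pi^{<}\otimes\pi^{0}\otimes\pi^{>}\,}
  U^{<}_\vv(L\ssl_n)\otimes_{\BC(\vv)}U^{0}_\vv(L\ssl_n)\otimes_{\BC(\vv)}U^{>}_\vv(L\ssl_n)
  \xrightarrow{\,m\,} U_\vv(L\ssl_n)
\]
is an isomorphism of $\BC(\vv)$-vector spaces: surjectivity then forces $m$ to be surjective, while injectivity forces each $\pi^{\bullet}$ to be injective (restrict to the relevant tensor factor, using that $\CA^{<},\CA^{0},\CA^{>}$ are nonzero), hence an algebra isomorphism --- part~(b) --- and then $m$ is an isomorphism --- part~(a). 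Surjectivity of the composite is the standard ``straightening'': extracting coefficients from (\ref{Aff 4}) and (\ref{Aff 5}) and inducting on $s$ gives \emph{finite} rewriting rules $\psi^\pm_{i,\pm s}\,e_{j,r}\rightsquigarrow\sum e_{j,\ast}\psi^\pm_{i,\pm\ast}$ and $f_{j,r}\,\psi^\pm_{i,\pm s}\rightsquigarrow\sum\psi^\pm_{i,\pm\ast}f_{j,\ast}$, relation (\ref{Aff 6}) gives $e_{i,r}f_{j,r'}\rightsquigarrow f_{j,r'}e_{i,r}+\delta_{ij}(\text{an element of }U^{0}_\vv(L\ssl_n))$, and (\ref{Aff 1}) controls the Cartan part; an induction on a suitable length of a monomial then shows every monomial in the generators is a $\BC(\vv)$-linear combination of ordered monomials of the form (word in the $f$'s)(word in the $\psi$'s)(word in the $e$'s), so $m\circ(\pi^{<}\otimes\pi^{0}\otimes\pi^{>})$ is onto.

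The real content is the injectivity, which I would obtain from a faithful ``vacuum module''. Put $V:=\CA^{<}\otimes\CA^{0}\otimes\CA^{>}$ and define operators on $V$ for the generators of $U_\vv(L\ssl_n)$ as follows: $f_{i,r}$ acts by left multiplication on the $\CA^{<}$-factor; $\psi^\pm_{i,\pm s}$ acts by first moving to the right past the $\CA^{<}$-factor via the rules coming from (\ref{Aff 5}) and then multiplying into the $\CA^{0}$-factor; and $e_{i,r}$ acts by being moved to the right past the $\CA^{<}$-factor (picking up $\CA^{0}$-valued corrections from (\ref{Aff 6})) and then past the $\CA^{0}$-factor (reordering via (\ref{Aff 4})), and finally multiplying into the $\CA^{>}$-factor. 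One must then check that these formulas are well defined --- i.e.\ the rewriting rules descend to the defining relations of $\CA^{<}$ and of $\CA^{0}$ --- and that the resulting operators satisfy all of (\ref{Aff 1})--(\ref{Aff 8}). The relations internal to $\CA^{>},\CA^{<},\CA^{0}$ hold because they hold in the respective algebra, and the mixed relations (\ref{Aff 4})--(\ref{Aff 5}) hold essentially by construction; the main obstacle is verifying the commutation relation (\ref{Aff 6}) and the Serre relations (\ref{Aff 7}) for the $e$-action (and their analogues for the $f$-action), which require a genuine computation with the rewriting rules. Granting that $V$ is a $U_\vv(L\ssl_n)$-module, evaluation at the vacuum $\mathbf{1}:=1\otimes 1\otimes 1$ yields a $\BC(\vv)$-linear map $U_\vv(L\ssl_n)\to V$, $u\mapsto u\cdot\mathbf{1}$, and by construction of the action the composite $V\to U_\vv(L\ssl_n)\to V$ equals $\mathrm{id}_V$; hence $m\circ(\pi^{<}\otimes\pi^{0}\otimes\pi^{>})$ is split injective, which finishes the proof.

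Two shortcuts would bypass building the module by hand. One can invoke Beck's isomorphism identifying the new Drinfeld presentation with the Drinfeld--Jimbo presentation of $U_\vv(\widehat{\ssl}_n)$, for which the triangular decomposition and the presentations of the two halves are classical, and transport the statement. Alternatively, one can realize $U_\vv(L\ssl_n)$ as a Drinfeld double of the ``upper Borel'' $U^{\geq}_\vv(L\ssl_n)$ generated by $\{e_{i,r},\psi^+_{i,s}\}$ and the ``lower Borel'' $U^{\leq}_\vv(L\ssl_n)$ generated by $\{f_{i,r},\psi^-_{i,-s}\}$, with respect to the new Drinfeld pairing, modulo identifying the two copies of $U^{0}_\vv(L\ssl_n)$: a Drinfeld double is tautologically the tensor product of its two constituents as a vector space, which reduces the claim to the (easier) triangular decompositions $U^{\geq}_\vv(L\ssl_n)\cong U^{0}_\vv(L\ssl_n)\otimes U^{>}_\vv(L\ssl_n)$ and $U^{\leq}_\vv(L\ssl_n)\cong U^{<}_\vv(L\ssl_n)\otimes U^{0}_\vv(L\ssl_n)$ of the Borels. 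In every approach the crux is the same: that the listed relations exhaust the relations among the Drinfeld currents, i.e.\ a Poincar\'e--Birkhoff--Witt type linear independence statement, which in the present paper is also reflected in the shuffle realization of Theorem~\ref{shuffle isomorphism}.
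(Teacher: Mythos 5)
The paper offers no proof of this proposition: it is stated as standard with a pointer to \cite[Theorem 2]{h}, which is exactly your opening move, so your proposal matches the paper's treatment. Your supplementary sketch (surjectivity by straightening with relations (\ref{Aff 4})--(\ref{Aff 6}), injectivity via a faithful vacuum module on $\CA^{<}\otimes\CA^{0}\otimes\CA^{>}$) is the standard architecture behind the cited result and is sound in outline, though you correctly flag rather than carry out the one genuinely laborious step --- verifying that the rewriting rules define an action satisfying (\ref{Aff 6})--(\ref{Aff 8}).
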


    %%%%%%%%%%%%%%%%%%%%%%%%%%%%%%%%%%%%%%%%%%%%%%%%%%%%%%%%%%%%%%%%%%%%%%%%%
    %%%%%%%%%%%%%%%%%%%%% The RTT integral form  %%%%%%%%%%%%%%%%%%%%%%%%%%%%
    %%%%%%%%%%%%%%%%%%%%%%%%%%%%%%%%%%%%%%%%%%%%%%%%%%%%%%%%%%%%%%%%%%%%%%%%%

\subsection{RTT integral form $\fU_\vv(L\ssl_n)$ and its PBWD bases}\label{ssec RTT integral form}
\

Let $\{\alpha_i\}_{i=1}^{n-1}$ be the standard simple positive roots of $\ssl_n$,
and $\Delta^+$ be the set of positive roots:
  $\Delta^+=\{\alpha_j+\alpha_{j+1}+\ldots+\alpha_i\}_{1\leq j\leq i<n}$.
Consider the following total ordering ``$\leq$'' on $\Delta^+$:
\begin{equation}\label{order 1}
  \alpha_j+\alpha_{j+1}+\ldots+\alpha_i\leq \alpha_{j'}+\alpha_{j'+1}+\ldots+\alpha_{i'}
  \ \mathrm{iff}\ j<j'\ \mathrm{or}\ j=j',i\leq i'.
\end{equation}
This gives rise to the total ordering ``$\leq$'' on $\Delta^+\times \BZ$:
\begin{equation}\label{order 2}
  (\beta,r)\leq (\beta',r')\ \mathrm{iff}\
  \beta<\beta'\ \mathrm{or}\ \beta=\beta', r\leq r'.
\end{equation}

For any $1\leq j\leq i\leq n-1$ and $r\in \BZ$, we choose a \emph{decomposition}
\begin{equation}\label{decomposition}
  \unl{r}=\unl{r}(\alpha_j+\ldots+\alpha_i,r)=(r_j,\ldots,r_i)\in \BZ^{i-j+1}
  \ \mathrm{such\ that}\ r_j+\ldots+r_i=r.
\end{equation}
A particular example of such a decomposition is
\begin{equation}\label{simplest decomposition}
  \unl{r}^{(0)}=\unl{r}^{(0)}(\alpha_j+\ldots+\alpha_i,r)=(r,0,\ldots,0).
\end{equation}

Following~\cite[(2.11, 2.18)]{t}, define the elements
$\wt{e}_{\beta,\unl{r}}\in U^{>}_\vv(L\ssl_n)$ and
$\wt{f}_{\beta,\unl{r}}\in U^{<}_\vv(L\ssl_n)$ via
\begin{equation}\label{integral higher roots}
\begin{split}
  & \wt{e}_{\alpha_j+\alpha_{j+1}+\ldots+\alpha_i,\unl{r}}:=
    (\vv-\vv^{-1})[\cdots[[e_{j,r_j},e_{j+1,r_{j+1}}]_\vv,e_{j+2,r_{j+2}}]_\vv,\cdots,e_{i,r_i}]_\vv,\\
  & \wt{f}_{\alpha_j+\alpha_{j+1}+\ldots+\alpha_i,\unl{r}}:=
    (\vv-\vv^{-1})[\cdots[[f_{j,r_j},f_{j+1,r_{j+1}}]_\vv,f_{j+2,r_{j+2}}]_\vv,\cdots,f_{i,r_i}]_\vv.
\end{split}
\end{equation}
In the special case $\unl{r}(\beta,r)=\unl{r}^{(0)}(\beta,r)$,
see~(\ref{simplest decomposition}), we shall denote
$\wt{e}_{\beta,\unl{r}}, \wt{f}_{\beta,\unl{r}}$ simply by
$\wt{e}_{\beta,r}, \wt{f}_{\beta,r}$.

The \emph{RTT integral form} $\fU_\vv(L\ssl_n)$ is the
$\BC[\vv,\vv^{-1}]$-subalgebra of $U_\vv(L\ssl_n)$ generated by
  $\{\wt{e}_{\beta,r},\wt{f}_{\beta,r},\psi^\pm_{i,\pm s}\}_{i\in I,\beta\in \Delta^+}^{r\in \BZ, s\in \BN}$.
Let $\fU^{<}_\vv(L\ssl_n),\fU^{>}_\vv(L\ssl_n)$, and $\fU^{0}_\vv(L\ssl_n)$
be the $\BC[\vv,\vv^{-1}]$-subalgebras of $\fU_\vv(L\ssl_n)$ generated by
  $\{\wt{f}_{\beta,r}\}_{\beta\in \Delta^+}^{r\in \BZ},
   \{\wt{e}_{\beta,r}\}_{\beta\in \Delta^+}^{r\in \BZ}$,
and
  $\{\psi^\pm_{i,\pm s}\}_{i\in I}^{s\in \BN}$, respectively.

\begin{Rem}\label{integral form via RTT}
The name ``\emph{RTT integral form}'' is motivated by the following two observations:

\noindent
(a) Due to Theorem~\ref{PBWD for RTT integral form} below, we have
  $\fU_\vv(L\ssl_n)\otimes_{\BC[\vv,\vv^{-1}]} \BC(\vv)\simeq U_\vv(L\ssl_n)$.

\noindent
(b) Due to~\cite[Proposition 3.20]{ft}, $\fU_\vv(L\ssl_n)$ coincides with
the $\Upsilon$-preimage of $\fU^\rtt_\vv(L\gl_n)$, where $\fU^\rtt_\vv(L\gl_n)$
is the RTT integral form of the quantum loop algebra of $\gl_n$~\cite{frt}
(cf.~\cite[\S3(ii)]{ft}), while
  $\Upsilon\colon U_\vv(L\ssl_n)\hookrightarrow
   \fU^\rtt_\vv(L\gl_n)\otimes_{\BC[\vv,\vv^{-1}]} \BC(\vv)$
is the $\BC(\vv)$-algebra embedding of~\cite{df}.
\end{Rem}

As before, fix a decomposition $\unl{r}(\beta,r)$ for each pair
$(\beta,r)\in \Delta^+\times \BZ$. We order
  $\{\wt{e}_{\beta,\unl{r}(\beta,r)}\}_{\beta\in \Delta^+}^{r\in \BZ}$
with respect to~(\ref{order 2}), while
  $\{\wt{f}_{\beta,\unl{r}(\beta,r)}\}_{\beta\in \Delta^+}^{r\in \BZ}$
are ordered with respect to the opposite ordering on $\Delta^+\times \BZ$.
Finally, choose any total ordering of $\{\psi_{i,r}\}_{i\in I}^{r\in \BZ}$
defined via
  $\psi_{i,r}:=
   \begin{cases}
     \psi^+_{i,r}, & \text{if } r\geq 0 \\
     \psi^-_{i,r}, & \text{if } r<0
   \end{cases}$.
Having specified these three total orderings, elements $F\cdot H\cdot E$
with $F,E,H$ being ordered monomials in
  $\{\wt{f}_{\beta,\unl{r}(\beta,r)}\}_{\beta\in \Delta^+}^{r\in \BZ}$,
  $\{\wt{e}_{\beta,\unl{r}(\beta,r)}\}_{\beta\in \Delta^+}^{r\in \BZ}$,
  $\{\psi_{i,r}\}_{i\in I}^{r\in \BZ}$
(note that we allow negative powers of $\psi_{i,0}$), respectively,
are called the \emph{ordered PBWD monomials} (in the corresponding generators).

%%%%%%%%%%%%%%%%%%%%%%%%% DETAILED EXPOSITION %%%%%%%%%%%%%%%%%%%%%%%%%%%%%%%
%Let $H$ denote the set of all functions $h\colon \Delta^+\times \BZ\to \BN$
%with finite support. The monomials
%\begin{equation}\label{ordered}
%  \wt{e}_h:=\prod\limits_{(\beta,r)\in \Delta^+\times \BZ}^{\rightarrow} \wt{e}_{\beta,\unl{r}(\beta,r)}^{h(\beta,r)}
%  \ \ \mathrm{and}\ \
%  \wt{f}_h:=\prod\limits_{(\beta,r)\in \Delta^+\times \BZ}^{\leftarrow} \wt{f}_{\beta,\unl{r}(\beta,r)}^{h(\beta,r)}
%  \ \ \mathrm{with}\ \ h\in H
%\end{equation}
%will be called the \emph{ordered PBWD monomials} of
%$\fU^{>}_\vv(L\ssl_n)$ and $\fU^{<}_\vv(L\ssl_n)$, respectively.
%We define
%  $\psi_{i,r}:=
%   \begin{cases}
%     \psi^+_{i,r}, & \text{if } r\geq 0 \\
%     \psi^-_{i,r}, & \text{if } r<0
%   \end{cases}$,
%so that $(\psi_{i,0})^{-1}=\psi^-_{i,0}$. Let $H_0$ denote the set of all
%functions $g\colon I\times \BZ\to \BZ$ with finite support and such that
%$g(i,r)\geq 0$ for $r\ne 0$. The monomials
%\begin{equation}\label{ordered cartan}
%  \psi_g:=\prod_{(i,r)\in I\times \BZ} \psi_{i,r}^{g(i,r)}
%  \ \ \mathrm{with}\ \ g\in H_0
%\end{equation}
%will be called the \emph{ordered PBWD monomials} of $\fU^{0}_\vv(L\ssl_n)$.
%%%%%%%%%%%%%%%%%%%%%%%%%%%%%%%%%%%%%%%%%%%%%%%%%%%%%%%%%%%%%%%%%%%%%%%%%%%%%%%%

The following was established in~\cite[Theorems 2.15, 2.17, 2.19, 2.22]{t},
cf.~\cite[Theorem~3.24]{ft}:

\begin{Thm}\label{PBWD for RTT integral form}
Fix a decomposition $\unl{r}(\beta,r)$ for every pair $(\beta,r)\in \Delta^+\times \BZ$.

\noindent
(a1) The ordered PBWD monomials in
  $\{\wt{f}_{\beta,\unl{r}(\beta,r)}, \psi_{i,r}, \wt{e}_{\beta,\unl{r}(\beta,r)}\}_{i\in I,\beta\in \Delta^+}^{r\in \BZ}$
form a basis of the free $\BC[\vv,\vv^{-1}]$-module $\fU_\vv(L\ssl_n)$.

%%%%%%%%%%%%%%%%%%%%%%%%% DETAILED EXPOSITION %%%%%%%%%%%%%%%%%%%%%%%%%%%%%%%
%The elements
%\begin{equation}\label{complete ordered monomials}
%  \left \{\wt{f}_{h_-}\cdot \psi_{h_0}\cdot \wt{e}_{h_+}\ |\ h_-,h_+\in H,h_0\in H_0\right\}
%\end{equation}
%form a basis of a free $\BC[\vv,\vv^{-1}]$-module $\fU_\vv(L\ssl_n)$.
%%%%%%%%%%%%%%%%%%%%%%%%%%%%%%%%%%%%%%%%%%%%%%%%%%%%%%%%%%%%%%%%%%%%%%%%%%%%%

\noindent
(a2) The ordered PBWD monomials in
  $\{\wt{f}_{\beta,\unl{r}(\beta,r)}, \psi_{i,r}, \wt{e}_{\beta,\unl{r}(\beta,r)}\}_{i\in I,\beta\in \Delta^+}^{r\in \BZ}$
form a $\BC(\vv)$-basis of $U_\vv(L\ssl_n)$.

%%%%%%%%%%%%%%%%%%%%%%%%% DETAILED EXPOSITION %%%%%%%%%%%%%%%%%%%%%%%%%%%%%%%
%The elements~(\ref{complete ordered monomials}) form $\BC(\vv)$-basis of $U_\vv(L\ssl_n)$.
%%%%%%%%%%%%%%%%%%%%%%%%%%%%%%%%%%%%%%%%%%%%%%%%%%%%%%%%%%%%%%%%%%%%%%%%%%%%%

\noindent
(b1) The ordered PBWD monomials in
  $\{\wt{e}_{\beta,\unl{r}(\beta,r)}\}_{\beta\in \Delta^+}^{r\in \BZ}$
form a basis of the free $\BC[\vv,\vv^{-1}]$-module $\fU^>_\vv(L\ssl_n)$.

%%%%%%%%%%%%%%%%%%%%%%%%% DETAILED EXPOSITION %%%%%%%%%%%%%%%%%%%%%%%%%%%%%%%
%The \emph{ordered PBWD monomials} $\{\wt{e}_h\}_{h\in H}$ form a basis of a free
%$\BC[\vv,\vv^{-1}]$-module $\fU^{>}_\vv(L\ssl_n)$.
%%%%%%%%%%%%%%%%%%%%%%%%%%%%%%%%%%%%%%%%%%%%%%%%%%%%%%%%%%%%%%%%%%%%%%%%%%%%%

\noindent
(b2) The ordered PBWD monomials in
  $\{\wt{e}_{\beta,\unl{r}(\beta,r)}\}_{\beta\in \Delta^+}^{r\in \BZ}$
form a $\BC(\vv)$-basis of $U^>_\vv(L\ssl_n)$.

%%%%%%%%%%%%%%%%%%%%%%%%% DETAILED EXPOSITION %%%%%%%%%%%%%%%%%%%%%%%%%%%%%%%
%The \emph{ordered PBWD monomials} $\{\wt{e}_h\}_{h\in H}$ form
%a $\BC(\vv)$-basis of $U^{>}_\vv(L\ssl_n)$.
%%%%%%%%%%%%%%%%%%%%%%%%%%%%%%%%%%%%%%%%%%%%%%%%%%%%%%%%%%%%%%%%%%%%%%%%%%%%%

\noindent
(c1) The ordered PBWD monomials in
  $\{\wt{f}_{\beta,\unl{r}(\beta,r)}\}_{\beta\in \Delta^+}^{r\in \BZ}$
form a basis of the free $\BC[\vv,\vv^{-1}]$-module $\fU^<_\vv(L\ssl_n)$.

%%%%%%%%%%%%%%%%%%%%%%%%% DETAILED EXPOSITION %%%%%%%%%%%%%%%%%%%%%%%%%%%%%%%
%The \emph{ordered PBWD monomials} $\{\wt{f}_h\}_{h\in H}$ form a basis of a free
%$\BC[\vv,\vv^{-1}]$-module $\fU^{<}_\vv(L\ssl_n)$.
%%%%%%%%%%%%%%%%%%%%%%%%%%%%%%%%%%%%%%%%%%%%%%%%%%%%%%%%%%%%%%%%%%%%%%%%%%%%%

\noindent
(c2) The ordered PBWD monomials in
  $\{\wt{f}_{\beta,\unl{r}(\beta,r)}\}_{\beta\in \Delta^+}^{r\in \BZ}$
form a $\BC(\vv)$-basis of $U^<_\vv(L\ssl_n)$.

%%%%%%%%%%%%%%%%%%%%%%%%% DETAILED EXPOSITION %%%%%%%%%%%%%%%%%%%%%%%%%%%%%%%
%The \emph{ordered PBWD monomials} $\{\wt{f}_h\}_{h\in H}$ form
%a $\BC(\vv)$-basis of $U^{<}_\vv(L\ssl_n)$.
%%%%%%%%%%%%%%%%%%%%%%%%%%%%%%%%%%%%%%%%%%%%%%%%%%%%%%%%%%%%%%%%%%%%%%%%%%%%%

\noindent
(d1) The ordered PBWD monomials in $\{\psi_{i,r}\}_{i\in I}^{r\in \BZ}$
form a basis of the free $\BC[\vv,\vv^{-1}]$-module $\fU^0_\vv(L\ssl_n)$.

%%%%%%%%%%%%%%%%%%%%%%%%% DETAILED EXPOSITION %%%%%%%%%%%%%%%%%%%%%%%%%%%%%%%
%The \emph{ordered PBWD monomials} $\{\psi_g\}_{g\in H_0}$ form a basis of a free
%$\BC[\vv,\vv^{-1}]$-module $\fU^{0}_\vv(L\ssl_n)$.
%%%%%%%%%%%%%%%%%%%%%%%%%%%%%%%%%%%%%%%%%%%%%%%%%%%%%%%%%%%%%%%%%%%%%%%%%%%%%

\noindent
(d2) The ordered PBWD monomials in $\{\psi_{i,r}\}_{i\in I}^{r\in \BZ}$
form a $\BC(\vv)$-basis of $U^0_\vv(L\ssl_n)$.
%%%%%%%%%%%%%%%%%%%%%%%%% DETAILED EXPOSITION %%%%%%%%%%%%%%%%%%%%%%%%%%%%%%%
%The \emph{ordered PBWD monomials} $\{\psi_g\}_{g\in H_0}$ form
%a $\BC(\vv)$-basis of $U^{0}_\vv(L\ssl_n)$.
%%%%%%%%%%%%%%%%%%%%%%%%%%%%%%%%%%%%%%%%%%%%%%%%%%%%%%%%%%%%%%%%%%%%%%%%%%%%%
\end{Thm}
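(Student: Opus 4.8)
The plan is to establish all eight statements simultaneously by bootstrapping from the $\BC(\vv)$-statements, which encode the usual PBWD theorem for the quantum loop algebra, to the integral $\BC[\vv,\vv^{-1}]$-statements, which require that the relevant ordered PBWD monomials actually span the integral forms (linear independence being inherited for free from the $\BC(\vv)$-case). I would first dispose of the $\BC(\vv)$-statements (a2), (b2), (c2), (d2): part (d2) is immediate since $U^0_\vv(L\ssl_n)$ is the commutative (Laurent) polynomial algebra described in Proposition~\ref{Triangular decomposition}(b); part (b2), and symmetrically (c2), is the standard PBWD theorem for the ``positive'' subalgebra, where one checks that the higher-root vectors $\wt e_{\beta,\unl r(\beta,r)}$ defined by the iterated $\vv$-commutators in~(\ref{integral higher roots}) differ from the ones built with the simplest decomposition $\unl r^{(0)}$ only by lower-order terms (using the defining relations~(\ref{Aff 2},~\ref{Aff 7})), so the change-of-decomposition transition matrix is unitriangular with respect to a suitable lexicographic refinement of the order~(\ref{order 2}); and part (a2) then follows by combining these with the triangular decomposition of Proposition~\ref{Triangular decomposition}(a).

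Next I would turn to the integral forms. For (d1), the generators $\psi^\pm_{i,\pm s}$ themselves generate $\fU^0_\vv(L\ssl_n)$ over $\BC[\vv,\vv^{-1}]$ and satisfy only the commutativity and inverse relations~(\ref{Aff 1}), so the ordered monomials in the $\psi_{i,r}$ (allowing negative powers of $\psi_{i,0}$) manifestly span, and freeness follows from (d2). For (b1): by definition $\fU^>_\vv(L\ssl_n)$ is generated over $\BC[\vv,\vv^{-1}]$ by the elements $\wt e_{\beta,r}=\wt e_{\beta,\unl r^{(0)}}$, so the task is to show that an arbitrary product of such generators, when rewritten in the ordered PBWD basis of $U^>_\vv(L\ssl_n)$ supplied by (b2), has coefficients in $\BC[\vv,\vv^{-1}]$ rather than merely in $\BC(\vv)$. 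This is exactly where the shuffle-algebra realization of Section~\ref{sec shuffle algebra realizations} enters: under the isomorphism of Theorem~\ref{shuffle isomorphism}, $\fU^>_\vv(L\ssl_n)$ is identified (Theorem~\ref{shuffle Grojnowski isomorphism}, Remark~\ref{shuffle RTT isomorphism}) with the integral shuffle algebra, whose elements have manifestly integral coefficients, and the straightening of a product of root vectors into ordered PBWD monomials can be carried out inside the shuffle algebra, where the ``leading term'' of $\wt e_{\beta,\unl r(\beta,r)}$ is an explicit monomial symmetric-type function with coefficient a power of $(\vv-\vv^{-1})$ times a unit. The factor $(\vv-\vv^{-1})$ in~(\ref{integral higher roots}) is precisely what makes these leading coefficients units in $\BC[\vv,\vv^{-1}]$, so the transition matrix between products of generators and ordered PBWD monomials is invertible over $\BC[\vv,\vv^{-1}]$. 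Part (c1) is proved symmetrically via the ``negative'' shuffle algebra (or by applying the Cartan anti-involution swapping $e$'s and $f$'s).

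Finally, (a1) is assembled from (b1), (c1), (d1) together with the triangular decomposition: the multiplication map $\fU^<\otimes\fU^0\otimes\fU^>\to\fU_\vv(L\ssl_n)$ is surjective by the definition of $\fU_\vv(L\ssl_n)$ as the algebra generated by all three families (here one must check that no new elements are produced by cross-commutators — this follows from relations~(\ref{Aff 4},~\ref{Aff 5},~\ref{Aff 6}), noting that $[\wt e_{\beta,r},\wt f_{\beta',r'}]$ lands in $\fU^0$ thanks again to the $(\vv-\vv^{-1})$ normalization, which cancels the denominator in~(\ref{Aff 6})), and injectivity of the map on the spanning set of ordered PBWD monomials is inherited from (a2). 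Then (a2) itself is recovered by tensoring (a1) with $\BC(\vv)$ and invoking Remark~\ref{integral form via RTT}(a), or can be cited directly from the PBWD theorem. The main obstacle is the integrality claim underlying (b1): one has to control, uniformly in the choice of decomposition $\unl r(\beta,r)$, that the straightening coefficients stay in $\BC[\vv,\vv^{-1}]$; this is handled by the shuffle-algebra computation identifying the leading shuffle element of each $\wt e_{\beta,\unl r}$ and verifying its coefficient is a unit, which is the technical heart carried out in~\cite{t}.
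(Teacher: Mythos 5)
The paper does not actually prove this theorem: it is imported wholesale from \cite[Theorems 2.15, 2.17, 2.19, 2.22]{t} (cf.\ \cite[Theorem 3.24]{ft}), so there is no in-paper argument to compare against line by line. Your outline does match the broad architecture of the cited proof (handle the Cartan part directly, prove the one-sided statements and assemble (a1)/(a2) via the triangular decomposition, and use the shuffle realization as the technical engine), but two of your key steps have genuine problems.

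First, you dispose of (b2)/(c2) as ``the standard PBWD theorem'' plus an unproved claim that changing the decomposition $\unl{r}(\beta,r)$ produces a unitriangular transition matrix in a lexicographic refinement of~(\ref{order 2}). For the single canonical choice $\unl{r}^{(0)}$ this is close to classical, but the whole point of the theorem is that it holds for \emph{every} choice of decompositions, and linear independence of the ordered PBWD monomials for arbitrary $\unl{r}(\beta,r)$ is precisely the hard part. In \cite{t} it is not obtained by a triangularity argument in the algebra; it is proved by pushing the monomials through $\Psi$ of Theorem~\ref{shuffle isomorphism} and showing that their images have distinct, nonvanishing leading terms under the specialization maps $\phi_{\unl{d}}$ of~(\ref{specialization map}). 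The paper even flags this: the remark after Theorem~\ref{shuffle isomorphism} says the shuffle isomorphism ``was manifestly used\dots to establish parts (b2, c2).'' Your sketch gives no mechanism that would detect linear independence across different decompositions. Second, your proof of (b1) invokes the identification $\fU^>_\vv(L\ssl_n)\simeq\fS^{(n)}$ of Remark~\ref{shuffle RTT isomorphism} to get integrality of the straightening coefficients. That identification (\cite[Theorem 3.34]{t}) is itself \emph{derived from} the PBWD basis of $\fU^>_\vv(L\ssl_n)$, so using it here is circular. The correct order is: spanning of $\fU^>_\vv(L\ssl_n)$ by ordered PBWD monomials over $\BC[\vv,\vv^{-1}]$ is an algebraic straightening argument using the relations and the $(\vv-\vv^{-1})$ normalization in~(\ref{integral higher roots}); linear independence comes from the rational shuffle realization and the specialization maps; only then does one deduce the integral shuffle realization. (A smaller issue: in (a1) the claim that $[\wt{e}_{\beta,r},\wt{f}_{\beta',r'}]$ lands in $\fU^0_\vv(L\ssl_n)$ is immediate only for simple roots; for higher roots the cross-commutators land in $\fU^<\cdot\fU^0\cdot\fU^>$ and need their own induction.)
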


This result together with Proposition~\ref{Triangular decomposition}
implies the triangular decomposition of $\fU_\vv(L\ssl_n)$:

\begin{Prop}\label{Triangular for RTT integral form}
The multiplication map
\begin{equation*}
  m\colon
  \fU^{<}_\vv(L\ssl_n)\otimes_{\BC[\vv,\vv^{-1}]}
  \fU^{0}_\vv(L\ssl_n)\otimes_{\BC[\vv,\vv^{-1}]}
  \fU^{>}_\vv(L\ssl_n)\longrightarrow \fU_\vv(L\ssl_n)
\end{equation*}
is an isomorphism of (free) $\BC[\vv,\vv^{-1}]$-modules.
\end{Prop}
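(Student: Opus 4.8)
The plan is to deduce the triangular decomposition of $\fU_\vv(L\ssl_n)$ by combining the already-established triangular decomposition of the ``big'' algebra $U_\vv(L\ssl_n)$ (Proposition~\ref{Triangular decomposition}(a)) with the PBWD basis statements of Theorem~\ref{PBWD for RTT integral form}. First I would fix once and for all a decomposition $\unl{r}(\beta,r)$ for each $(\beta,r)\in\Delta^+\times\BZ$ and the three total orderings described before Theorem~\ref{PBWD for RTT integral form}, so that the notions of ordered PBWD monomials in $\fU^<_\vv$, $\fU^0_\vv$, and $\fU^>_\vv$ are pinned down. The map $m$ is visibly a homomorphism of $\BC[\vv,\vv^{-1}]$-modules (it is the restriction of the multiplication in $U_\vv(L\ssl_n)$, using that the three subalgebras sit inside $\fU_\vv(L\ssl_n)$), so the content is injectivity and surjectivity.

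For surjectivity, note that $\fU_\vv(L\ssl_n)$ is by definition generated over $\BC[\vv,\vv^{-1}]$ by the $\wt e_{\beta,r}$, $\wt f_{\beta,r}$, and $\psi^\pm_{i,\pm s}$; hence it is spanned by arbitrary (unordered) products of these generators. Using the relations of $U_\vv(L\ssl_n)$ one can move all $\wt f$-factors to the left and all $\wt e$-factors to the right; the point is that each resulting ``reordering'' step, and each appearance of a $\psi$ produced when commuting an $e$ past an $f$ via~(\ref{Aff 6}), stays inside the $\BC[\vv,\vv^{-1}]$-span of the three subalgebras. This is exactly the kind of statement recorded in~\cite[Theorems 2.15, 2.22]{t} underlying Theorem~\ref{PBWD for RTT integral form}(a1), so I would simply invoke that: the ordered PBWD monomials in $\{\wt f_{\beta,\unl r(\beta,r)},\psi_{i,r},\wt e_{\beta,\unl r(\beta,r)}\}$ span $\fU_\vv(L\ssl_n)$, and each such monomial manifestly lies in the image of $m$ (it is $F\cdot H\cdot E$ with $F\in\fU^<_\vv$, $H\in\fU^0_\vv$, $E\in\fU^>_\vv$). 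Hence $m$ is surjective.

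For injectivity, I would argue as follows. By Theorem~\ref{PBWD for RTT integral form}(c1), (d1), (b1), the ordered PBWD monomials in the $\wt f$'s (resp.\ the $\psi$'s, resp.\ the $\wt e$'s) form $\BC[\vv,\vv^{-1}]$-bases of $\fU^<_\vv(L\ssl_n)$ (resp.\ $\fU^0_\vv(L\ssl_n)$, resp.\ $\fU^>_\vv(L\ssl_n)$). Therefore the source of $m$ is a free $\BC[\vv,\vv^{-1}]$-module with basis the triple tensor products of these, i.e.\ the elements $F\otimes H\otimes E$, and $m$ sends this basis to the set of ordered PBWD monomials $F\cdot H\cdot E$ in $\fU_\vv(L\ssl_n)$. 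By Theorem~\ref{PBWD for RTT integral form}(a1) the latter set is a $\BC[\vv,\vv^{-1}]$-basis of $\fU_\vv(L\ssl_n)$ — in particular these elements are $\BC[\vv,\vv^{-1}]$-linearly independent and distinct. Thus $m$ carries a basis bijectively onto a basis, so it is an isomorphism of free $\BC[\vv,\vv^{-1}]$-modules; freeness of the middle and outer terms is part of the cited theorem, and freeness of the tensor product follows since $\BC[\vv,\vv^{-1}]$-modules that are free tensor to a free module.

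The only subtle point — and the step I would treat most carefully — is the bookkeeping that the assignment $(F,H,E)\mapsto F\cdot H\cdot E$ really does match the indexing of ordered PBWD monomials in part~(a1): one must check that the total ordering used in~(a1) for the combined generator set $\{\wt f_{\beta,\unl r(\beta,r)},\psi_{i,r},\wt e_{\beta,\unl r(\beta,r)}\}$ is precisely ``all $\wt f$'s (in their order) $<$ all $\psi$'s (in their order) $<$ all $\wt e$'s (in their order)'', which is how that theorem is set up. Granting this compatibility of conventions, the proposition is a formal consequence of Theorem~\ref{PBWD for RTT integral form} and Proposition~\ref{Triangular decomposition}; no genuinely new computation is needed, since all the hard analytic/combinatorial work is already absorbed into the PBWD basis theorem.
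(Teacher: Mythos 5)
Your argument is correct and is exactly what the paper intends: Proposition~\ref{Triangular for RTT integral form} is stated there as an immediate consequence of Theorem~\ref{PBWD for RTT integral form} (parts (a1), (b1), (c1), (d1)) together with Proposition~\ref{Triangular decomposition}, via precisely the basis-to-basis argument you give, and the ``bookkeeping'' point you flag is settled by the paper's definition of ordered PBWD monomials as products $F\cdot H\cdot E$.
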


    %%%%%%%%%%%%%%%%%%%%%%%%%%%%%%%%%%%%%%%%%%%%%%%%%%%%%%%%%%%%%%%%%%%%%%%%%
    %%%%%%%%%%%%%%%%%%%%%%% The Lusztig integral form %%%%%%%%%%%%%%%%%%%%%%%
    %%%%%%%%%%%%%%%%%%%%%%%%%%%%%%%%%%%%%%%%%%%%%%%%%%%%%%%%%%%%%%%%%%%%%%%%%

\subsection{Lusztig integral form $\sU_\vv(L\ssl_n)$ and its PBWD basis}\label{ssec Lusztig integral form}
\

To introduce the Lusztig integral form, we recall the Drinfeld-Jimbo realization of
$U_\vv(L\ssl_n)$. Let $\wt{I}=I\cup\{i_0\}$ be the vertex set of the extended
Dynkin diagram and $(c_{ij})_{i,j\in \wt{I}}$ be the extended Cartan matrix.
The \emph{Drinfeld-Jimbo quantum loop algebra of $\ssl_n$}, denoted by
$U^{\ddj}_\vv(L\ssl_n)$, is the associative $\BC(\vv)$-algebra generated by
  $\{E_i, F_i, K^{\pm 1}_i\}_{i\in \widetilde{I}}$
with the following defining relations:
\begin{equation}\label{DJ 1}
  [K_i,K_j]=0,\ K_i^{\pm 1}\cdot K_i^{\mp 1}=1,\
  \prod_{i\in \wt{I}} K_i=1,
\end{equation}
\begin{equation}\label{DJ 2}
  K_iE_j=\vv^{c_{ij}}E_jK_i,\ K_iF_j=\vv^{-c_{ij}}F_jK_i,\
  [E_i,F_j]=\delta_{ij}\frac{K_i-K_i^{-1}}{\vv-\vv^{-1}},
\end{equation}
\begin{equation}\label{DJ 4}
  E_iE_j=E_jE_i,\ F_iF_j=F_jF_i\ \mathrm{if}\ c_{ij}=0,
\end{equation}
\begin{equation}\label{DJ 5}
  [E_i,[E_i,E_j]_{\vv^{-1}}]_{\vv}=0,\
  [F_i,[F_i,F_j]_{\vv^{-1}}]_{\vv}=0 \ \mathrm{if}\ c_{ij}=-1.
\end{equation}

The following result is due to~\cite{d1}:

\begin{Prop}\label{identification of DJ and Dr}
There is a $\BC(\vv)$-algebra isomorphism
$U^\ddj_\vv(L\ssl_n)\iso U_\vv(L\ssl_n)$, such that
\begin{equation*}
  E_i\mapsto e_{i,0},\ F_i\mapsto f_{i,0},\
  K^{\pm 1}_i\mapsto \psi^\pm_{i,0}\ \mathrm{for}\ i\in I,
\end{equation*}
\begin{equation*}
  E_{i_0}\mapsto
  (-\vv)^{-n}\cdot (\psi^+_{1,0}\cdots\psi^+_{n-1,0})^{-1}\cdot
  [\cdots[f_{1,1},f_{2,0}]_\vv,\cdots,f_{n-1,0}]_\vv,
\end{equation*}
\begin{equation*}
  F_{i_0}\mapsto
  (-\vv)^n\cdot [e_{n-1,0},\cdots,[e_{2,0},e_{1,-1}]_{\vv^{-1}}\cdots]_{\vv^{-1}}\cdot
  \psi^+_{1,0}\cdots\psi^+_{n-1,0}.
\end{equation*}
%\begin{equation*}
%  K_{i_0}^{\pm 1}\mapsto \psi^\mp_{1,0}\cdots \psi^\mp_{n-1,0}.
%\end{equation*}
\end{Prop}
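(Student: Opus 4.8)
The plan is to realize the asserted isomorphism as a pair of mutually inverse $\BC(\vv)$-algebra homomorphisms. First I would construct the forward map $\phi\colon U^\ddj_\vv(L\ssl_n)\to U_\vv(L\ssl_n)$ sending the Drinfeld--Jimbo generators to the elements listed in the statement; then I would show that $\phi$ is surjective and injective, and finally read off its inverse on the new Drinfeld generators from the surjectivity argument.

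\textbf{Well-definedness of $\phi$.} One must check that the proposed images of $E_i,F_i,K_i^{\pm1}$ satisfy~\eqref{DJ 1}--\eqref{DJ 5}. For the generators indexed by $i\in I$ this is immediate, since the corresponding instances of~\eqref{DJ 1},~\eqref{DJ 2},~\eqref{DJ 4},~\eqref{DJ 5} are exactly the $z^0w^0$-Fourier coefficients of~\eqref{Aff 1} and~\eqref{Aff 4}--\eqref{Aff 8} (under $\psi^\pm_{i,0}\leftrightarrow K_i^{\pm1}$). The content lies in the relations involving the affine node $i_0$: the constraint $\prod_{i\in\wt I}K_i=1$ forces $K_{i_0}\mapsto(\psi^+_{1,0}\cdots\psi^+_{n-1,0})^{-1}$, after which one verifies, in order, (i) $K_j\,\phi(E_{i_0})=\vv^{c_{j i_0}}\phi(E_{i_0})\,K_j$ and the analogue for $\phi(F_{i_0})$, which follow from the mode-$0$ parts of~\eqref{Aff 4}--\eqref{Aff 5} together with the identity $\sum_{i\in I}c_{ij}=\delta_{j,1}+\delta_{j,n-1}=-c_{i_0,j}$; (ii) the quantum Serre relations of $\phi(E_{i_0})$ against $e_{1,0}$ and $e_{n-1,0}$ and its commutativity with $e_{j,0}$ for $j\neq1,n-1$, together with the $F$-analogues, which reduce to~\eqref{Aff 7}--\eqref{Aff 8} after transporting the single mode-shifted generator into position by means of~\eqref{Aff 2}--\eqref{Aff 3}; and (iii) the Cartan relation $[\phi(E_{i_0}),\phi(F_{i_0})]=(K_{i_0}-K_{i_0}^{-1})/(\vv-\vv^{-1})$. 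Step (iii) is the one genuinely delicate computation here: one expands the two nested $\vv$-commutators, applies~\eqref{Aff 6} repeatedly to contract matching pairs $e_{k,0},f_{k,0}$ into Cartan elements, and checks that all remaining cross terms cancel by virtue of the quadratic relations~\eqref{Aff 2}--\eqref{Aff 5}.

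\textbf{Surjectivity.} By construction $\on{Im}(\phi)$ contains $e_{i,0},f_{i,0},\psi^\pm_{i,0}$ for all $i\in I$ as well as $\phi(E_{i_0})$ and $\phi(F_{i_0})$. Reading off the formulas in the statement, one recovers $f_{1,1}$ (resp.\ $e_{1,-1}$) from $\phi(E_{i_0})$ (resp.\ $\phi(F_{i_0})$) by successively bracketing with $e_{n-1,0},e_{n-2,0},\dots,e_{2,0}$ (resp.\ $f_{n-1,0},\dots,f_{2,0}$) and adjusting by invertible elements of $U^0_\vv(L\ssl_n)$; hence $f_{1,1},e_{1,-1}\in\on{Im}(\phi)$, and then $\psi^+_{1,1}=(\vv-\vv^{-1})[e_{1,0},f_{1,1}]$ and $\psi^-_{1,-1}=-(\vv-\vv^{-1})[e_{1,-1},f_{1,0}]$ lie in $\on{Im}(\phi)$ by the relevant Fourier coefficients of~\eqref{Aff 6}. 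To reach all remaining loop generators one invokes Lusztig's braid-group symmetries on $U^\ddj_\vv(L\ssl_n)$ (in particular the automorphism attached to translation by a minuscule coweight), whose iterates applied to $E_i,F_i$ have $\phi$-images bringing in the higher loop modes, to conclude by induction on $|r|$ that $e_{i,r},f_{i,r}\in\on{Im}(\phi)$ for all $i\in I$ and $r\in\BZ$; then all $\psi^\pm_{i,\pm s}$ follow from~\eqref{Aff 6}. Thus $\phi$ is surjective.

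\textbf{Injectivity, and the main obstacle.} To finish I would compare Poincar\'e--Birkhoff--Witt bases on the two sides. Lusztig's braid-group construction equips $U^\ddj_\vv(L\ssl_n)$ with a basis of ordered PBW monomials indexed by finitely supported functions on the positive affine roots, and---using the identifications of the previous paragraph---$\phi$ carries each such monomial, modulo the triangular decomposition of Proposition~\ref{Triangular decomposition} and invertible Cartan factors, to an ordered loop PBW monomial as in Theorem~\ref{PBWD for RTT integral form}(a2); since the latter are $\BC(\vv)$-linearly independent, $\phi$ is injective and hence an isomorphism, whose inverse on the new Drinfeld generators is obtained by unwinding the surjectivity step. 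The \emph{main obstacle} is precisely the assertion that makes this inverse well defined, namely that the braid-translated real root vectors obey the quadratic relations~\eqref{Aff 2}--\eqref{Aff 5} and the quantum affine Serre relations~\eqref{Aff 7}--\eqref{Aff 8}: this is the technical heart of Drinfeld's theorem and in general demands the full machinery of the quantum affine braid-group action together with Levendorskii--Soibelman-type straightening identities (as carried out by Beck). For type $A$ one may alternatively route through the RTT realization of $U_\vv(L\gl_n)$ and the Gauss-decomposition isomorphism of~\cite{df}, which is closer in spirit to the shuffle-algebra techniques used elsewhere in this note.
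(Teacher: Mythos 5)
The paper does not prove this proposition at all: it is recalled verbatim from Drinfeld's announcement \cite{d1} (with the explicit formulas for $E_{i_0},F_{i_0}$ in the form standard for type $A$), so there is no in-paper argument to compare yours against. Judged on its own terms, your outline is the standard route to Drinfeld's new-realization theorem and its overall architecture is sound; the elementary verifications you do spell out (the mode-zero relations for $i\in I$, the identity $\sum_{i\in I}c_{ij}=-c_{i_0,j}$, the extraction of $f_{1,1},e_{1,-1}$ and then $\psi^{\pm}_{1,\pm1}$ via~\eqref{Aff 6}) are all correct.

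However, as written the proposal is a roadmap rather than a proof, and the steps it defers are precisely the ones that carry all the content. Step (iii) of well-definedness (the relation $[\phi(E_{i_0}),\phi(F_{i_0})]=(K_{i_0}-K_{i_0}^{-1})/(\vv-\vv^{-1})$) and the Serre relations at the affine node are nontrivial computations that you assert ``reduce to''~\eqref{Aff 2}--\eqref{Aff 8} without exhibiting the reduction. More seriously, the surjectivity argument has a latent circularity: to conclude that the $\phi$-images of the braid-translated generators $T_\omega^{\pm k}(E_i)$ are (up to Cartan factors) the higher loop modes $e_{i,\pm k}$, you must already know how the lattice part of the affine braid group acts through $\phi$ on the loop generators --- and establishing exactly that is the substance of Beck's proof of Drinfeld's theorem, not a consequence of the elementary manipulations you list. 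So either you carry out the Levendorskii--Soibelman/braid-group analysis in full, or you take the alternative type-$A$ route you mention at the end (RTT realization plus the Gauss-decomposition isomorphism of \cite{df}), which is in fact the mechanism the present paper leans on elsewhere (cf.\ Remark~\ref{integral form via RTT}). Citing one of these as the source of the hard step, rather than gesturing at it, would make the argument complete; as it stands the decisive implications are named but not established.
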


For $k\in \BN$, set
  $[k]_\vv:=\frac{\vv^k-\vv^{-k}}{\vv-\vv^{-1}},
   [k]_\vv!:=\prod_{\ell=1}^k [\ell]_\vv$.
For $i\in \wt{I}, k\in \BN$, define the divided powers
\begin{equation}\label{Lusztig divided}
  E_i^{(k)}:=\frac{E_i^k}{[k]_\vv!}\ \
  \mathrm{and}\ \ F_i^{(k)}:=\frac{F_i^k}{[k]_\vv!}.
\end{equation}

The \emph{Lusztig integral form} $\sU^\ddj_\vv(L\ssl_n)$ is the
$\BC[\vv,\vv^{-1}]$-subalgebra of $U^\ddj_\vv(L\ssl_n)$ generated by
$\{E_i^{(k)},F_i^{(k)},K_i^{\pm 1}\}_{i\in \wt{I}}^{k\in \BN}$.
In view of Proposition~\ref{identification of DJ and Dr}, it gives rise to
the $\BC[\vv,\vv^{-1}]$-subalgebra $\sU_\vv(L\ssl_n)$ of $U_\vv(L\ssl_n)$
which shall be referred to as the \emph{Lusztig integral form} of $U_\vv(L\ssl_n)$.

Let us now recall a more explicit description of $\sU_\vv(L\ssl_n)$.
For $i\in I, r\in \BZ, k\in \BZ_{>0}$, define
\begin{equation}\label{Cartan divided}
  {\psi^+_{i,0}; r \brack k}:=
  \prod_{\ell=1}^{k}\frac{\psi^+_{i,0}\vv^{r-\ell+1}-\psi^-_{i,0}\vv^{-r+\ell-1}}{\vv^{\ell}-\vv^{-\ell}}.
\end{equation}
We also define the pairwise commuting generators $\{h_{i,r}\}_{i\in I}^{r\ne 0}$ via
\begin{equation}\label{h-generators}
   \psi^\pm_i(z)=
   \psi^\pm_{i,0}\cdot \exp\left(\pm(\vv-\vv^{-1})\sum_{r>0}h_{i,\pm r}z^{\mp r}\right).
\end{equation}
Finally, for $i\in I, r\in \BZ, k\in \BN$, we define the divided powers
\begin{equation}\label{devided power}
  \se_{i,r}^{(k)}:=\frac{e_{i,r}^k}{[k]_\vv!}\ \ \mathrm{and}\ \
  \sff_{i,r}^{(k)}:=\frac{f_{i,r}^k}{[k]_\vv!}.
\end{equation}
Let $\sU^<_\vv(L\ssl_n), \sU^>_\vv(L\ssl_n)$, and $\sU^0_\vv(L\ssl_n)$ be
the $\BC[\vv,\vv^{-1}]$-subalgebras of $U_\vv(L\ssl_n)$ generated by
  $\{\sff_{i,r}^{(k)}\}_{i\in I}^{r\in \BZ,k\in \BN},
   \{\se_{i,r}^{(k)}\}_{i\in I}^{r\in \BZ,k\in \BN}$,
and
  $\{\psi^\pm_{i,0}, \frac{h_{i,\pm k}}{[k]_\vv}, {\psi^+_{i,0}; r \brack k}\}_{i\in I}^{r\in \BZ,k\in \BZ_{>0}}$,
respectively.

\begin{Rem}\label{relation to Grojnowski}
The subalgebra $\sU^>_\vv(L\ssl_n)\subset U^>_\vv(L\ssl_n)$
was first considered in~\cite{g}.
\end{Rem}

The following triangular decomposition of $\sU_\vv(L\ssl_n)$
is due to~\cite[Proposition 6.1]{cp}:

\begin{Prop}\label{Triangular for Lusztig integral form}
(a) $\sU^<_\vv(L\ssl_n),\sU^0_\vv(L\ssl_n),\sU^>_\vv(L\ssl_n)$ are
$\BC[\vv,\vv^{-1}]$-subalgebras of $\sU_\vv(L\ssl_n)$.

\noindent
(b) (Triangular decomposition of $\sU_\vv(L\ssl_n)$)
The multiplication map
\begin{equation*}
  m\colon
  \sU^{<}_\vv(L\ssl_n)\otimes_{\BC[\vv,\vv^{-1}]}
  \sU^{0}_\vv(L\ssl_n)\otimes_{\BC[\vv,\vv^{-1}]}
  \sU^{>}_\vv(L\ssl_n)\longrightarrow \sU_\vv(L\ssl_n)
\end{equation*}
is an isomorphism of (free) $\BC[\vv,\vv^{-1}]$-modules.
\end{Prop}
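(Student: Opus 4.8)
The plan is to deduce Proposition~\ref{Triangular for Lusztig integral form} from the already-known triangular decomposition of $U_\vv(L\ssl_n)$ (Proposition~\ref{Triangular decomposition}) together with the cited result~\cite[Proposition 6.1]{cp}. Since part (a) only asserts that the three subalgebras are honest subalgebras — which is immediate from their definitions as $\BC[\vv,\vv^{-1}]$-subalgebras generated by the listed families — the content is part (b): that the multiplication map $m$ is an isomorphism of free $\BC[\vv,\vv^{-1}]$-modules. I would split this into injectivity and surjectivity, handling them separately.

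\medskip\noindent\textbf{Injectivity.} Injectivity of $m$ is essentially free: the composition
\[
  \sU^{<}_\vv(L\ssl_n)\otimes \sU^{0}_\vv(L\ssl_n)\otimes \sU^{>}_\vv(L\ssl_n)
  \longrightarrow
  U^{<}_\vv(L\ssl_n)\otimes U^{0}_\vv(L\ssl_n)\otimes U^{>}_\vv(L\ssl_n)
  \overset{m}{\longrightarrow}
  U_\vv(L\ssl_n)
\]
is injective because the second arrow is the isomorphism of Proposition~\ref{Triangular decomposition}(a), and the first arrow is injective since each $\sU^{\bullet}_\vv$ is a $\BC[\vv,\vv^{-1}]$-subalgebra of the corresponding $\BC(\vv)$-algebra $U^{\bullet}_\vv$ and tensoring over the domain $\BC[\vv,\vv^{-1}]$ with torsion-free modules preserves injectivity. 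Since this composition factors through $m\colon \sU^{<}_\vv\otimes\sU^{0}_\vv\otimes\sU^{>}_\vv\to\sU_\vv(L\ssl_n)$, that map is injective too.

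\medskip\noindent\textbf{Surjectivity.} This is the substantive point, and the one I expect to be the main obstacle. One must show that every generator of $\sU_\vv(L\ssl_n)$ — namely each $E_i^{(k)}, F_i^{(k)}, K_i^{\pm1}$ for $i\in\wt I$ under the identification of Proposition~\ref{identification of DJ and Dr} — lies in the image $\sU^{<}_\vv\cdot\sU^{0}_\vv\cdot\sU^{>}_\vv$, and then that this image is closed under multiplication. For the finite-type generators $i\in I$ this is transparent: $E_i^{(k)}\mapsto \se_{i,0}^{(k)}$, $F_i^{(k)}\mapsto\sff_{i,0}^{(k)}$, $K_i^{\pm1}\mapsto\psi^{\pm}_{i,0}$ sit directly in $\sU^{>}_\vv$, $\sU^{<}_\vv$, $\sU^{0}_\vv$ respectively. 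The affine node $i_0$ is where work is needed: $E_{i_0}^{(k)}$ and $F_{i_0}^{(k)}$ are, up to powers of $(-\vv)$ and the invertible Cartan factor $(\psi^+_{1,0}\cdots\psi^+_{n-1,0})^{\pm1}$, divided powers of the iterated $\vv$-commutators $[\cdots[f_{1,1},f_{2,0}]_\vv,\cdots,f_{n-1,0}]_\vv$ and $[e_{n-1,0},\cdots,[e_{2,0},e_{1,-1}]_{\vv^{-1}}\cdots]_{\vv^{-1}}$; one has to check these divided powers of iterated commutators lie in $\sU^{<}_\vv$ and $\sU^{>}_\vv$ respectively. Rather than reprove this from scratch, the right move is simply to cite~\cite[Proposition 6.1]{cp}, where precisely this triangular decomposition of the Lusztig form is established — the passage above is a recollection of that statement, so the ``proof'' here is essentially the reduction to~\cite{cp} via Propositions~\ref{Triangular decomposition} and~\ref{identification of DJ and Dr}.

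\medskip\noindent\textbf{Freeness.} Once $m$ is a bijection, freeness of $\sU_\vv(L\ssl_n)$ as a $\BC[\vv,\vv^{-1}]$-module — and hence of the tensor product on the left — follows from the PBWD basis result recalled in Theorem~\ref{PBWD for Lusztig integral form} (referenced in the outline), which exhibits an explicit $\BC[\vv,\vv^{-1}]$-basis; alternatively, freeness of the three tensor factors individually plus the bijection gives it directly. I would phrase the final write-up as: \emph{part (a) is immediate from the definitions; part (b) is~\cite[Proposition 6.1]{cp}, whose injectivity half also follows formally from Proposition~\ref{Triangular decomposition}(a) as the three Lusztig subalgebras embed into the corresponding subalgebras of $U_\vv(L\ssl_n)$.} The only genuine subtlety, and thus the main obstacle, is verifying that the divided powers at the affine vertex do not force new elements into the Cartan part — but this is exactly the content secured by~\cite{cp}.
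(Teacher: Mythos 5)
Your overall strategy matches the paper's, which in fact offers no proof at all: the proposition is stated as a recollection and attributed entirely to \cite[Proposition 6.1]{cp}, so reducing everything to that citation is exactly what the paper does. Your injectivity and freeness remarks are correct supplementary observations (injectivity does follow formally from Proposition~\ref{Triangular decomposition}(a) plus flatness of torsion-free $\BC[\vv,\vv^{-1}]$-modules, and freeness from the PBWD bases).

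The one genuine misstep is your treatment of part (a). It is \emph{not} immediate from the definitions. The subalgebras $\sU^{<}_\vv,\sU^{0}_\vv,\sU^{>}_\vv$ are defined as $\BC[\vv,\vv^{-1}]$-subalgebras of the big algebra $U_\vv(L\ssl_n)$, generated respectively by all divided powers $\sff_{i,r}^{(k)}$ and $\se_{i,r}^{(k)}$ with arbitrary $r\in\BZ$, and by the Cartan elements $\psi^{\pm}_{i,0}$, $h_{i,\pm k}/[k]_\vv$, ${\psi^+_{i,0}; r \brack k}$. Part (a) asserts that all of these lie in $\sU_\vv(L\ssl_n)$, which by definition is generated only by $E_i^{(k)},F_i^{(k)},K_i^{\pm1}$ for $i\in\wt I$ under the identification of Proposition~\ref{identification of DJ and Dr}. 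Showing, say, that $\se_{i,r}^{(k)}$ for $r\neq 0$, or that $h_{i,\pm k}/[k]_\vv$, belongs to that subalgebra is a nontrivial containment (it rests on Lusztig's braid-group/root-vector machinery and explicit commutator identities), and it is precisely part of what \cite[Proposition 6.1]{cp} establishes --- it cannot be dismissed as ``immediate from their definitions as subalgebras generated by the listed families.'' Since you do ultimately route the whole proposition through the \cite{cp} citation, the final conclusion stands, but the claim that (a) is content-free should be removed: the hard work at the affine node that you correctly identify for surjectivity in (b) is of the same nature as, and inseparable from, the containments asserted in (a).
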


Following~(\ref{simplest decomposition},~\ref{integral higher roots}),
define the elements $e_{\beta,r}\in U^{>}_\vv(L\ssl_n)$ and
$f_{\beta,r}\in U^{<}_\vv(L\ssl_n)$ via
\begin{equation}\label{integral higher roots 2}
\begin{split}
  & e_{\alpha_j+\alpha_{j+1}+\ldots+\alpha_i,r}:=
    [\cdots[[e_{j,r},e_{j+1,0}]_\vv,e_{j+2,0}]_\vv,\cdots,e_{i,0}]_\vv,\\
  & f_{\alpha_j+\alpha_{j+1}+\ldots+\alpha_i,r}:=
    [\cdots[[f_{j,r},f_{j+1,0}]_\vv,f_{j+2,0}]_\vv,\cdots,f_{i,0}]_\vv.
\end{split}
\end{equation}
For $\beta\in \Delta^+,r\in \BZ,k\in \BN$, we define the divided powers
\begin{equation}\label{devided higher root power}
  \se_{\beta,r}^{(k)}:=\frac{e_{\beta,r}^k}{[k]_\vv!}\ \ \mathrm{and}\ \
  \sff_{\beta,r}^{(k)}:=\frac{f_{\beta,r}^k}{[k]_\vv!}.
\end{equation}
Note that $\se_{\beta,r}^{(k)}\in \sU^>_\vv(L\ssl_n)$ and
$\sff_{\beta,r}^{(k)}\in \sU^<_\vv(L\ssl_n)$ for any $\beta,r,k$ as above,
due to~\cite[Theorem~6.6]{l}.

%%%%%%%%%%%%%%%%%%%%%%%%%%%%%%%%%%%%%%%%%%%%%%%%%%%%%%%%%%%%%%%%%%%%%%%%%%%%%%%%%%
%%%%%%%%%%%%%%%%%%%%%%%%%%%%%%%% Explanation %%%%%%%%%%%%%%%%%%%%%%%%%%%%%%%%%%%%%
%%%%%%%%%%%%%%%%%%%%%%%%%%%%%%%%%%%%%%%%%%%%%%%%%%%%%%%%%%%%%%%%%%%%%%%%%%%%%%%%%%
%To apply~\cite{l} (which is valid for finite simple Lie algebras)               %
%to the current setup, we note that given $1\leq j<i\leq n-1$ and                %
%an integer $r\in \BZ$, there is a natural algebra homomorphism                  %
%  $U^{\ddj,>}_\vv(\ssl_{i-j+2})\to U^>_\vv(L\ssl_n)$                            %
%defined via                                                                     %
%  $E_1\mapsto e_{j,r}, E_2\mapsto e_{j+1,0},\ldots,E_{i-j+1}\mapsto e_{i,0}$.   %
%Then, the longest root generator $E_\theta\in U^\ddj_\vv(\ssl_{i-j+2})$         %
%is mapped to $e_{\alpha_j+\alpha_{j+1}+\ldots+\alpha_i,r}\in U^>_\vv(L\ssl_n)$. %
%Hence, the inclusion                                                            %
%  $\se_{\alpha_j+\alpha_{j+1}+\ldots+\alpha_i,r}^{(k)}\in \sU^>_\vv(L\ssl_n)$   %
%follows from $E^{(k)}_\theta\in \sU^{\ddj}_\vv(\ssl_{i-j+2})$                   %
%established in~\cite{l}.                                                        %
%%%%%%%%%%%%%%%%%%%%%%%%%%%%%%%%%%%%%%%%%%%%%%%%%%%%%%%%%%%%%%%%%%%%%%%%%%%%%%%%%%
%%%%%%%%%%%%%%%%%%%%%%%%%%%%%%%%%%%%%%%%%%%%%%%%%%%%%%%%%%%%%%%%%%%%%%%%%%%%%%%%%%
%%%%%%%%%%%%%%%%%%%%%%%%%%%%%%%%%%%%%%%%%%%%%%%%%%%%%%%%%%%%%%%%%%%%%%%%%%%%%%%%%%

Evoking~(\ref{order 2}), the monomials of the form
  $\prod\limits_{(\beta,r)\in \Delta^+\times \BZ}^{\rightarrow} \se_{\beta,r}^{(k_{\beta,r})}$
and
  $\prod\limits_{(\beta,r)\in \Delta^+\times \BZ}^{\leftarrow} \sff_{\beta,r}^{(k_{\beta,r})}$
(with $k_{\beta,r}\in \BN$ and only finitely many of them being nonzero)
are called the \emph{ordered PBWD monomials} of $\sU^{>}_\vv(L\ssl_n)$
and $\sU^{<}_\vv(L\ssl_n)$, respectively.
The following result was established in~\cite{t}:

\begin{Thm}\cite[Theorem 8.5]{t}\label{PBWD for Lusztig integral form}
The ordered PBWD monomials form bases of the free $\BC[\vv,\vv^{-1}]$-modules
$\sU^>_\vv(L\ssl_n)$ and $\sU^<_\vv(L\ssl_n)$, respectively.
%%%%%%%%%%%%%%%%%%%%%%%%% DETAILED EXPOSITION %%%%%%%%%%%%%%%%%%%%%%%%%%%%%%%
%(a) The ordered PBWD monomials in
%  $\{\se_{\beta,r}^{(k)}\}_{\beta\in \Delta^+}^{r\in \BZ,k\in \BN}$
%form a basis of a free $\BC[\vv,\vv^{-1}]$-module $\sU^>_\vv(L\ssl_n)$.
%
%\noindent
%(b) The ordered PBWD monomials in
%  $\{\sff_{\beta,r}^{(k)}\}_{\beta\in \Delta^+}^{r\in \BZ,k\in \BN}$
%form a basis of a free $\BC[\vv,\vv^{-1}]$-module $\sU^<_\vv(L\ssl_n)$.
%%%%%%%%%%%%%%%%%%%%%%%%%%%%%%%%%%%%%%%%%%%%%%%%%%%%%%%%%%%%%%%%%%%%%%%%%%%%%
\end{Thm}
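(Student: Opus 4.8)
The plan is to reduce the assertion to a straightening property of ordered products of divided powers, and to verify that property through the shuffle-algebra realization.

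\emph{Linear independence.} Take all decompositions equal to $\unl r(\beta,r)=\unl r^{(0)}(\beta,r)$. Comparing~(\ref{integral higher roots}) and~(\ref{integral higher roots 2}) gives $\wt e_{\beta,r}=(\vv-\vv^{-1})e_{\beta,r}$, so every ordered PBWD monomial $\prod^{\rightarrow}\se_{\beta,r}^{(k_{\beta,r})}$ of $\sU^>_\vv(L\ssl_n)$ is a nonzero element of $\BC(\vv)$ times the ordered PBWD monomial $\prod^{\rightarrow}\wt e_{\beta,r}^{k_{\beta,r}}$ of $\fU^>_\vv(L\ssl_n)$; by Theorem~\ref{PBWD for RTT integral form}(b2) the latter are $\BC(\vv)$-linearly independent, hence so are the former, a fortiori over $\BC[\vv,\vv^{-1}]$. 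Since moreover $\se_{\beta,r}^{(k)}\in\sU^>_\vv(L\ssl_n)$ for all $\beta,r,k$ by~\cite[Theorem~6.6]{l}, the $\BC[\vv,\vv^{-1}]$-span $V$ of the ordered PBWD monomials satisfies $V\subseteq\sU^>_\vv(L\ssl_n)$, and it remains to prove the reverse inclusion.

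\emph{Reduction.} Because $\sU^>_\vv(L\ssl_n)$ is generated over $\BC[\vv,\vv^{-1}]$ by the $\se_{i,r}^{(k)}=\se_{\alpha_i,r}^{(k)}$, each of which is itself an ordered PBWD monomial, the inclusion $\sU^>_\vv(L\ssl_n)\subseteq V$ follows once $V$ is shown to be a subalgebra. By a straightforward induction along the ordering~(\ref{order 2}) this reduces to two straightening facts: (i) repeated factors merge, $\se_{\beta,r}^{(a)}\cdot\se_{\beta,r}^{(b)}=\binom{a+b}{a}_\vv\,\se_{\beta,r}^{(a+b)}$ with the Gaussian binomial coefficient $\binom{a+b}{a}_\vv$ lying in $\BC[\vv,\vv^{-1}]$; and (ii) for $(\beta,r)>(\beta',r')$ the ``out of order'' product $\se_{\beta,r}^{(k)}\cdot\se_{\beta',r'}^{(k')}$ is a $\BC[\vv,\vv^{-1}]$-linear combination of ordered PBWD monomials. (By (b2) we already know it is some finite $\BC(\vv)$-combination; the content of (ii) is the integrality of the coefficients.) Step (ii) is the crux.

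\emph{Handling (ii) via the shuffle realization, and the main obstacle.} I would transport the problem through the shuffle-algebra realization $\Psi\colon U^>_\vv(L\ssl_n)\to S^{(n)}$ of Theorem~\ref{shuffle isomorphism}, under which $\sU^>_\vv(L\ssl_n)$ becomes the integral form $\sS^{(n)}$ of Theorem~\ref{shuffle Grojnowski isomorphism}, cut out inside $S^{(n)}$ by explicit integrality and divisibility conditions. One first computes $\Psi(\se_{\beta,r}^{(k)})$ — the minimal rational factor attached to $k$ copies of the colour string of $\beta$, times an explicit symmetric Laurent polynomial — and grades ordered PBWD monomials by their colour-degree $\bd\in\BN^{I}$, which is preserved under multiplication. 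Fixing $\bd$, one introduces a specialization of the shuffle variables (sending the successive blocks of equal-coloured variables to geometric progressions keyed to the decompositions $\unl r^{(0)}$) which reads off, up to an explicit unit, a monomial ``leading term'' of each PBWD monomial. The two things to prove are that these leading terms are $\BC[\vv,\vv^{-1}]$-linearly independent and that they generate, over $\BC[\vv,\vv^{-1}]$, the entire leading-term module of $\sS^{(n)}$ in degree $\bd$; a downward induction on $\bd$ then gives $\sS^{(n)}=\Psi(V)$, i.e. $\sU^>_\vv(L\ssl_n)\subseteq V$. The statement for $\sU^<_\vv(L\ssl_n)$ is entirely analogous (or follows from the standard anti-automorphism exchanging the $e$- and $f$-generators, which permutes ordered PBWD monomials up to units). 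The hard part is the surjectivity onto the leading-term module: one must match every element of $\sS^{(n)}$, degree by degree, by an integral combination of images of ordered PBWD monomials — equivalently, check that the $[k]_\vv!$ denominators in the divided powers are exactly compensated by symmetrization over the stabilizers of the specialization points. This requires a careful analysis of the defining conditions of $\sS^{(n)}$, and is where the precise shape of Grojnowski's integral form is indispensable.
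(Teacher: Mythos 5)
First, a remark on the comparison itself: the paper does not prove this statement --- it is imported verbatim from \cite[Theorem 8.5]{t} --- so the only meaningful benchmark is the proof in that reference, and your overall strategy (linear independence via the identity $\wt e_{\beta,r}=(\vv-\vv^{-1})e_{\beta,r}$ and Theorem~\ref{PBWD for RTT integral form}(b2); the containment $V\subseteq \sU^>_\vv(L\ssl_n)$ via \cite[Theorem 6.6]{l}; the reverse containment via the shuffle realization and the specialization maps $\phi_{\unl d}$) is indeed the strategy used there. The first two steps are correct as written. Your ``reduction'' paragraph, however, is a detour: once one proves directly that every element of $\sS^{(n)}$ is a $\BC[\vv,\vv^{-1}]$-combination of images of ordered PBWD monomials (which is what your third paragraph sets out to do), the inclusion $\sU^>_\vv(L\ssl_n)\subseteq V$ follows immediately from Theorem~\ref{shuffle Grojnowski isomorphism} with no need to first show that $V$ is a subalgebra; and conversely, your straightening fact (ii) is essentially equivalent to the theorem, so nothing is gained by the reduction.

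The genuine gap is exactly where you flag it, and it is not a small one: the entire content of the theorem is the spanning statement $\sS^{(n)}\subseteq\Psi(V)$, and you describe what must be checked without producing an argument. Concretely, three things are missing. (1) The explicit computation of $\phi_{\unl d}\bigl(\Psi\bigl(\prod^{\rightarrow}\se_{\beta,r}^{(k_{\beta,r})}\bigr)\bigr)$ for the ``dominant'' $\unl d$ attached to the monomial, exhibiting it as a unit multiple of $(\vv-\vv^{-1})^{\sum_\beta d_\beta(i(\beta)-j(\beta))}$ times a controllable leading term, together with the vanishing of $\phi_{\unl d}$ on the images of monomials that are ``smaller'' in the appropriate order on the set of $\unl d$ with $\sum_\beta d_\beta\beta=\unl k$ (the induction runs over this set for fixed $\unl k$, not ``downward on $\unl k$''). (2) The key divisibility input: for a good $F$, the specialization $\phi_{\unl d}(F)$ enjoys additional divisibility by linear factors in the $y_{\beta,s}$ forced by the wheel conditions~(\ref{wheel conditions}), and it is precisely this extra divisibility --- not the $(\vv-\vv^{-1})$-divisibility built into Definition 8.6 --- that compensates the $[k_{\beta,r}]_\vv!$ denominators of the divided powers; your phrase about ``symmetrization over stabilizers'' gestures at this but does not establish it, and without it the integrality of the coefficients in the expansion of $F$ fails. (3) The verification that the matched leading terms generate the full image of $\phi_{\unl d}$ on $\sS^{(n)}_{\unl k}$ over $\BC[\vv,\vv^{-1}]$. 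Items (1)--(3) constitute essentially all of the proof of \cite[Theorem 8.5]{t} (and of Theorem~\ref{shuffle Grojnowski isomorphism}, which is proved by the same induction), so the proposal as it stands is a correct road map rather than a proof.
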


    %%%%%%%%%%%%%%%%%%%%%%%%%%%%%%%%%%%%%%%%%%%%%%%%%%%%%%%%%%%%%%%%%%%%%%%%%
    %%%%%%%%%%%%%%%%%%%%%%%%%% New Drinfeld pairing %%%%%%%%%%%%%%%%%%%%%%%%%
    %%%%%%%%%%%%%%%%%%%%%%%%%%%%%%%%%%%%%%%%%%%%%%%%%%%%%%%%%%%%%%%%%%%%%%%%%

\subsection{New Drinfeld Hopf algebra structure and Hopf pairing}\label{ssec new Drinfeld pairing}
\

Let us first recall the general notion of a Hopf pairing, following~\cite[\S3]{krt}.
Given two Hopf algebras $A$ and $B$ over a field $\mathsf{k}$, the bilinear map
\begin{equation*}
  \varphi\colon A \times B\longrightarrow \mathsf{k}
\end{equation*}
is called a \emph{Hopf pairing} if it satisfies the following properties
(for any $a,a'\in A$ and $b,b'\in B$):
\begin{equation}\label{Hopf pairing 1}
  \varphi(a,bb')=\varphi(a_{(1)},b)\varphi(a_{(2)},b'),\
  \varphi(aa',b)=\varphi(a,b_{(2)})\varphi(a',b_{(1)}),
\end{equation}
\begin{equation}\label{Hopf pairing 2}
  \varphi(a,1_B)=\epsilon_A(a),\
  \varphi(1_A,b)=\epsilon_B(b),\
  \varphi(S_A(a),S_B(b))=\varphi(a,b),
\end{equation}
where we use the Sweedler notation for the coproduct:
$\Delta(x)=x_{(1)}\otimes x_{(2)}$.

Following~\cite[Theorem 2.1]{di}, we endow $U_\vv(L\ssl_n)$ with
the new Drinfeld topological Hopf algebra structure by defining
the coproduct $\Delta$, the counit $\epsilon$, and the antipode $S$ as follows:
\begin{equation*}\label{coproduct}
  \Delta\colon
  \psi^\pm_i(z)\mapsto \psi^\pm_i(z)\otimes \psi^\pm_i(z),\
  e_i(z)\mapsto e_i(z)\otimes 1 + \psi^-_i(z)\otimes e_i(z),\
  f_i(z)\mapsto 1\otimes f_i(z) + f_i(z)\otimes \psi_i^+(z),
\end{equation*}
\begin{equation*}\label{counit}
  \epsilon\colon
  e_i(z)\mapsto 0,\
  f_i(z)\mapsto 0,\
  \psi^{\pm}_i(z)\mapsto 1,
\end{equation*}
\begin{equation*}\label{antipode}
  S\colon
  e_i(z)\mapsto -\psi^-_i(z)^{-1}e_i(z),\
  f_i(z)\mapsto -f_i(z)\psi^+_i(z)^{-1},\
  \psi^\pm_i(z)\mapsto \psi^\pm_i(z)^{-1}.
\end{equation*}
Thus, the $\BC(\vv)$-subalgebras $U^{\leq}_\vv(L\ssl_n),U^{\geq}_\vv(L\ssl_n)$
generated by
  $\{f_{i,r},\psi^+_{i,s},(\psi^+_{i,0})^{-1}\}_{i\in I}^{r\in \BZ,s\in \BN}$
and
  $\{e_{i,r},\psi^-_{i,-s},(\psi^-_{i,0})^{-1}\}_{i\in I}^{r\in \BZ,s\in \BN}$,
respectively, are actually Hopf subalgebras of $(U_\vv(L\ssl_n),\Delta,S,\epsilon)$.

The following is well-known (see e.g.~\cite[\S9.3]{g}, cf.~\cite[Propositions 2.27, 2.30]{n}):

\begin{Prop}\label{Drinfeld double sln}
The assignment
\begin{equation}\label{pairing formulas}
\begin{split}
  & \varphi(e_i(z), f_j(w))=\frac{\delta_{i,j}}{\vv-\vv^{-1}}\delta\left(\frac{z}{w}\right),\
    \varphi(\psi^-_i(z),\psi^+_j(w))=\frac{\vv^{c_{ij}}z-w}{z-\vv^{c_{ij}}w },\\
  & \varphi(e_i(z),\psi^+_j(w))=0,\ \varphi(\psi^-_j(w),f_i(z))=0,
\end{split}
\end{equation}
gives rise to a non-degenerate Hopf algebra pairing
  $\varphi\colon U^\geq_\vv(L\ssl_n)\times U^\leq_\vv(L\ssl_n)\to \BC(\vv)$.
\end{Prop}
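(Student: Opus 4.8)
The plan is to establish the Hopf pairing axioms \eqref{Hopf pairing 1}--\eqref{Hopf pairing 2} and non-degeneracy by reducing everything to the defining relations \eqref{Aff 1}--\eqref{Aff 8} of $U_\vv(L\ssl_n)$ together with the explicit coproduct formulas. First I would observe that a bilinear form on $U^\geq_\vv(L\ssl_n)\times U^\leq_\vv(L\ssl_n)$ satisfying \eqref{Hopf pairing 1} is uniquely determined by its values on the algebra generators, provided such a form exists; so the content of the existence claim is \emph{consistency}: the values prescribed by \eqref{pairing formulas}, when propagated through \eqref{Hopf pairing 1}, must respect all the defining relations in each factor. I would spell out the coproducts of the generating series in component form (extracting coefficients of $z^{-r}$, $w^{-s}$ from $\Delta e_i(z)$, $\Delta f_j(w)$, $\Delta\psi^\pm_i(z)$), and then verify, relation by relation, that pairing a relator in $U^\geq$ against an arbitrary generator of $U^\leq$ (and vice versa) gives zero. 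The key auxiliary computation is the ``adjoint action'' formulas: from \eqref{Hopf pairing 1} one derives $\varphi(xe_i(z), \xi) = \varphi(x, \xi_{(2)})\varphi(e_i(z),\xi_{(1)})$, and since $\varphi(e_i(z),-)$ is supported on the $f$-part while $\varphi(\psi^-_i(z),-)$ is supported on the $\psi^+$-part, most terms collapse, leaving a manageable recursion.

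Concretely, the heart of the verification is checking four families of compatibilities. (i) The $ee$-relation \eqref{Aff 2} paired against $f_k(u)$: using $\Delta e_i(z) = e_i(z)\otimes 1 + \psi^-_i(z)\otimes e_i(z)$ one expands both sides and the identity reduces to the functional equation satisfied by $\varphi(\psi^-_i(z),\psi^+_j(w)) = \frac{\vv^{c_{ij}}z-w}{z-\vv^{c_{ij}}w}$ together with $\varphi(e_i(z),f_k(w)) = \frac{\delta_{ik}}{\vv-\vv^{-1}}\delta(z/w)$ — here the rational factor in the $\psi$-$\psi$ pairing is \emph{exactly} engineered so that \eqref{Aff 2} is respected. (ii) The $\psi$-$\psi$ relation \eqref{Aff 1} and $\psi^+_{i,0}\psi^-_{i,0}=1$ paired against $\psi^+$ and $f$: these are routine once one knows the multiplicativity of $\varphi$ restricted to the Cartan parts, which follows because $\varphi(\psi^-_i(z),\psi^+_j(w))\varphi(\psi^-_i(z'),\psi^+_j(w))$ is consistent with $\Delta$ being group-like on the $\psi$'s. (iii) The mixed relations \eqref{Aff 4}--\eqref{Aff 5} and the cross term $[e_i,f_j]$ in \eqref{Aff 6}: these are what force the ``off-diagonal'' values $\varphi(e_i(z),\psi^+_j(w)) = 0 = \varphi(\psi^-_j(w),f_i(z))$ and tie together the normalizations. (iv) The Serre relations \eqref{Aff 7}--\eqref{Aff 8}: one pairs the quantum Serre relator against $f_k(u_1)f_l(u_2)$ (resp.\ two $e$'s), and after using multiplicativity the claim becomes a $\delta$-function identity that holds because the two summands in the Serre relation are symmetric in $z_1\leftrightarrow z_2$ while the $\vv^{\pm 1}$-commutators introduce compensating signs; this is the classical computation going back to the Drinfeld-double construction.

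For the three axioms in \eqref{Hopf pairing 2}: $\varphi(a,1)=\epsilon(a)$ and $\varphi(1,b)=\epsilon(b)$ hold by inspection on generators (the pairing of any $e$ or $f$ with $1$ is $0=\epsilon$, and $\varphi(\psi^\pm,1)=1=\epsilon(\psi^\pm)$) and then extend by \eqref{Hopf pairing 1}; the antipode compatibility $\varphi(S_A a, S_B b)=\varphi(a,b)$ is automatic for any Hopf pairing satisfying \eqref{Hopf pairing 1} and the counit conditions, since $S_A\otimes S_B$ intertwines the relevant structure — I would either cite \cite[\S3]{krt} for this general fact or give the one-line argument that $(a,b)\mapsto \varphi(S_A a, S_B b)$ satisfies the same defining properties and hence coincides with $\varphi$ by uniqueness. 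Finally, for non-degeneracy I would invoke the triangular decomposition of Proposition~\ref{Triangular decomposition}: the pairing is ``graded'' with respect to the root lattice and loop grading, pairing the degree-$(\beta,m)$ part of $U^\geq$ with the degree-$(-\beta,m)$ part of $U^\leq$, and on each graded piece non-degeneracy follows from the PBWD bases of Theorem~\ref{PBWD for RTT integral form}(b2,c2,d2) — concretely, one shows the Gram matrix of $\varphi$ in the PBWD basis, suitably ordered, is triangular with nonzero (indeed, explicit) diagonal entries, built from products of the elementary pairings $\varphi(e_{i,r},f_{i,r})=\pm(\vv-\vv^{-1})^{-1}$ up to units. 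The main obstacle is bookkeeping in step (iv): organizing the Serre-relation computation so that the $\delta$-function manipulations are transparent rather than a morass of indices; everything else is either a short generating-series identity or a formal consequence of \eqref{Hopf pairing 1}. Since Proposition~\ref{Drinfeld double sln} is stated as ``well-known'' with references to \cite{g} and \cite{n}, I would in fact keep the proof brief, indicating the Drinfeld-double origin and deferring the routine verifications to those sources while highlighting the grading argument for non-degeneracy.
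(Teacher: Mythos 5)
The paper does not actually prove Proposition~\ref{Drinfeld double sln}: it is stated as well-known with pointers to \cite[\S 9.3]{g} and \cite[Propositions 2.27, 2.30]{n}, and you correctly anticipate this by saying you would keep the argument brief and defer the routine checks to those sources. Your sketch is the standard Drinfeld-double verification and is sound in outline: uniqueness from \eqref{Hopf pairing 1}, consistency with the defining relations, the counit/antipode axioms as formal consequences, and non-degeneracy from a graded triangularity argument. Two comments on substance. First, a degree-counting slip: by \eqref{n>2 degree zero pairing}-type considerations the relator \eqref{Aff 2} (degree $1_i+1_j$) pairs to zero against a single $f_k(u)$ for trivial reasons, so the functional equation for $\varphi(\psi^-_i(z),\psi^+_j(w))$ is only tested by pairing against \emph{quadratic} monomials $f_k(u_1)f_l(u_2)$; likewise the cubic Serre relator \eqref{Aff 7} must be paired against \emph{cubic} monomials in the $f$'s, not against $f_k(u_1)f_l(u_2)$ as written. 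These are exactly the bookkeeping points you flag as the main obstacle, so they should be stated with the correct test monomials. Second, the two items that carry real content --- compatibility with the Serre relations and non-degeneracy --- are only gestured at; in the cited references (and in the spirit of this paper) both are obtained in one stroke from the shuffle realization of Theorem~\ref{shuffle isomorphism}: one defines the pairing on the free algebra and identifies its radical on the positive half with the ideal generated by \eqref{Aff 2} and \eqref{Aff 7}, which gives well-definedness and non-degeneracy simultaneously. Your alternative plan of exhibiting a triangular Gram matrix in the PBWD bases of Theorem~\ref{PBWD for RTT integral form} is viable, but carrying it out amounts to re-deriving the explicit pairing formulas of Section~\ref{sec main result} (cf.\ Lemma~\ref{n>2 general pairing 3} and Remark~\ref{Opposite duality}), so it is more work than the shuffle route; it does have the merit of also producing the dual-basis statement recorded in Remark~\ref{Opposite duality}. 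One further small point for the non-degeneracy of the Cartan block: it reduces to the invertibility over $\BC(\vv)$ of the matrices $\left(\frac{[r c_{ij}]_\vv}{r}\right)_{i,j\in I}$ coming from the pairing of the $h_{i,\pm r}$ of \eqref{h-generators}, which holds for $\ssl_n$ but should be said explicitly rather than absorbed into ``the graded pieces''.
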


    %%%%%%%%%%%%%%%%%%%%%%%%%%%%%%%%%%%%%%%%%%%%%%%%%%%%%%%%%%%%%%%%%%%%%%%%%
    %%%%%%%%%%%%%%%%%%%%%%%%%%%%%%%%%%%%%%%%%%%%%%%%%%%%%%%%%%%%%%%%%%%%%%%%%
    %%%%%%%%%%%%%%%%%%%%% Shuffle Algebra Realizations %%%%%%%%%%%%%%%%%%%%%%
    %%%%%%%%%%%%%%%%%%%%%%%%%%%%%%%%%%%%%%%%%%%%%%%%%%%%%%%%%%%%%%%%%%%%%%%%%
    %%%%%%%%%%%%%%%%%%%%%%%%%%%%%%%%%%%%%%%%%%%%%%%%%%%%%%%%%%%%%%%%%%%%%%%%%

\section{Shuffle algebra $S^{(n)}$ and its integral forms}
\label{sec shuffle algebra realizations}

    %%%%%%%%%%%%%%%%%%%%%%%%%%%%%%%%%%%%%%%%%%%%%%%%%%%%%%%%%%%%%%%%%%%%%%%%%
    %%%%%%%%%%%%% Standard Shuffle Algebra of type A_{n-1}  %%%%%%%%%%%%%%%%%
    %%%%%%%%%%%%%%%%%%%%%%%%%%%%%%%%%%%%%%%%%%%%%%%%%%%%%%%%%%%%%%%%%%%%%%%%%

\subsection{Shuffle algebra $S^{(n)}$}\label{ssec usual shuffle algebra}
\

Let $\Sigma_k$ denote the symmetric group in $k$ elements, and set
  $\Sigma_{(k_1,\ldots,k_{n-1})}:=\Sigma_{k_1}\times \cdots\times \Sigma_{k_{n-1}}$
for $k_1,\ldots,k_{n-1}\in \BN$.
Consider an $\BN^I$-graded $\BC(\vv)$-vector space
  $\BS^{(n)}=
   \underset{\underline{k}=(k_1,\ldots,k_{n-1})\in \BN^{I}}
   \bigoplus\BS^{(n)}_{\underline{k}}$,
where $\BS^{(n)}_{(k_1,\ldots,k_{n-1})}$ consists of $\Sigma_{\unl{k}}$-symmetric
rational functions in the variables $\{x_{i,r}\}_{i\in I}^{1\leq r\leq k_i}$.
Define $\zeta_{i,j}(z):=\frac{z-\vv^{-c_{ij}}}{z-1}$ for $i,j\in I$.
Let us introduce the bilinear \emph{shuffle product} $\star$ on $\BS^{(n)}$:
given  $F\in \BS^{(n)}_{\underline{k}}$ and $G\in \BS^{(n)}_{\underline{\ell}}$,
define $F\star G\in \BS^{(n)}_{\underline{k}+\underline{\ell}}$ via
\begin{equation}\label{shuffle product}
\begin{split}
  & (F\star G)(x_{1,1},\ldots,x_{1,k_1+\ell_1};\ldots;x_{n-1,1},\ldots, x_{n-1,k_{n-1}+\ell_{n-1}}):=
    \unl{k}!\cdot\unl{\ell}!\times\\
  & \Sym_{\Sigma_{\unl{k}+\unl{\ell}}}
    \left(F\left(\{x_{i,r}\}_{i\in I}^{1\leq r\leq k_i}\right) G\left(\{x_{i',r'}\}_{i'\in I}^{k_{i'}<r'\leq k_{i'}+\ell_{i'}}\right)\cdot
    \prod_{i\in I}^{i'\in I}\prod_{r\leq k_i}^{r'>k_{i'}}\zeta_{i,i'}(x_{i,r}/x_{i',r'})\right).
\end{split}
\end{equation}
Here, $\unl{k}!=\prod_{i\in I}k_i!$, while for
  $f\in \BC(\{x_{i,1},\ldots,x_{i,m_i}\}_{i\in I})$
we define its \emph{symmetrization} via
\begin{equation*}
  \Sym_{\Sigma_{\unl{m}}}(f)\left(\{x_{i,1},\ldots,x_{i,m_i}\}_{i\in I}\right):=
  \frac{1}{\unl{m}!}\cdot
  \sum_{(\sigma_1,\ldots,\sigma_{n-1})\in \Sigma_{\unl{m}}}
  f\left(\{x_{i,\sigma_i(1)},\ldots,x_{i,\sigma_i(m_i)}\}_{i\in I}\right).
\end{equation*}
This endows $\BS^{(n)}$ with a structure of an associative unital algebra
with the unit $\textbf{1}\in \BS^{(n)}_{(0,\ldots,0)}$.

We will be interested only in the subspace of $\BS^{(n)}$ defined by
the \emph{pole} and \emph{wheel conditions}:

\noindent
$\bullet$
We say that $F\in \BS^{(n)}_{\underline{k}}$ satisfies the \emph{pole conditions} if
\begin{equation}\label{pole conditions}
  F=
  \frac{f(x_{1,1},\ldots,x_{n-1,k_{n-1}})}
       {\prod_{i=1}^{n-2}\prod_{r\leq k_i}^{r'\leq k_{i+1}}(x_{i,r}-x_{i+1,r'})},\
  \mathrm{where}\ f\in (\BC(\vv)[\{x_{i,r}^{\pm 1}\}_{i\in I}^{1\leq r\leq k_i}])^{\Sigma_{\unl{k}}}.
\end{equation}

\noindent
$\bullet$
We say that $F\in \BS^{(n)}_{\underline{k}}$ satisfies the \emph{wheel conditions} if
\begin{equation}\label{wheel conditions}
  F(\{x_{i,r}\})=0\ \mathrm{once}\ x_{i,r_1}=\vv x_{i+\epsilon,s}=\vv^2 x_{i,r_2}\
  \mathrm{for\ some}\ \epsilon, i, r_1, r_2, s,
\end{equation}
where
  $\epsilon\in \{\pm 1\},\ i,i+\epsilon\in I,\
   1\leq r_1,r_2\leq k_i,\ 1\leq s\leq k_{i+\epsilon}$.

Let $S^{(n)}_{\underline{k}}\subset \BS^{(n)}_{\underline{k}}$ denote
the subspace of all elements $F$ satisfying these two conditions and set
  $S^{(n)}:=\underset{\underline{k}\in \BN^{I}}\bigoplus S^{(n)}_{\underline{k}}$.
It is straightforward to check that the subspace $S^{(n)}\subset\BS^{(n)}$ is $\star$-closed.

The resulting associative $\BC(v)$-algebra
$\left(S^{(n)},\star\right)$ shall be called the \emph{shuffle algebra}.

    %%%%%%%%%%%%%%%%%%%%%%%%%%%%%%%%%%%%%%%%%%%%%%%%%%%%%%%%%%%%%%%%%%%%%%%%%
    %%%%%%%%%%%%%%%%%%%% Shuffle Algebra Realizations %%%%%%%%%%%%%%%%%%%%%%%
    %%%%%%%%%%%%%%%%%%%%%%%%%%%%%%%%%%%%%%%%%%%%%%%%%%%%%%%%%%%%%%%%%%%%%%%%%

\subsection{Shuffle algebra realizations}\label{ssec shuffle realizations}
\

The \emph{shuffle algebra} $\left(S^{(n)},\star\right)$ is related
to $U^>_\vv(L\ssl_n)$ via the following result of~\cite{t} (cf.~\cite{n}):

\begin{Thm}\label{shuffle isomorphism}
The assignment $e_{i,r}\mapsto x_{i,1}^r\ (i\in I,r\in \BZ)$ gives rise to a
$\BC(\vv)$-algebra isomorphism $\Psi\colon U_\vv^{>}(L\ssl_n)\iso S^{(n)}$.
\end{Thm}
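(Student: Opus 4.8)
The plan is to establish the isomorphism $\Psi\colon U^>_\vv(L\ssl_n)\iso S^{(n)}$ in three stages: first verify that the assignment $e_{i,r}\mapsto x_{i,1}^r$ is well defined (i.e.\ respects all defining relations of $U^>_\vv(L\ssl_n)$ from Proposition~\ref{Triangular decomposition}(b)), then verify injectivity, then surjectivity. For well-definedness, I would compute the shuffle product $x_{i,1}^r\star x_{j,1}^s$ directly from~(\ref{shuffle product}): it equals a symmetrization of $x_{i,1}^r x_{j,1}^s\zeta_{i,j}(x_{i,1}/x_{j,1})$ (or the appropriate product of $\zeta$-factors when $i=j$), and one checks that the quadratic relations~(\ref{Aff 2}) translate into the elementary identity $(z-\vv^{c_{ij}}w)\zeta_{i,j}(z/w)=(\vv^{c_{ij}}z-w)\zeta_{j,i}(w/z)$ after clearing denominators, while the Serre relations~(\ref{Aff 7}) translate into the vanishing guaranteed precisely by the wheel conditions~(\ref{wheel conditions}). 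This is the standard Feigin--Odesskii-type verification and is essentially bookkeeping.

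The substantive direction is to show $\Psi$ is an isomorphism onto $S^{(n)}$, and here I would \emph{not} attempt a direct argument but instead invoke the PBWD machinery already in hand. By Theorem~\ref{PBWD for RTT integral form}(b2), the ordered PBWD monomials in $\{\wt e_{\beta,\unl r(\beta,r)}\}$ form a $\BC(\vv)$-basis of $U^>_\vv(L\ssl_n)$ for any choice of decompositions; in particular, choosing the simplest decompositions~(\ref{simplest decomposition}), the monomials in the $\wt e_{\beta,r}=(\vv-\vv^{-1})\,e_{\beta,r}$ (up to scalars, the nested commutators of~(\ref{integral higher roots 2})) form a basis. I would then compute the images $\Psi(e_{\beta,r})$ explicitly as shuffle elements: a nested $\vv$-commutator of $x$'s produces, after all the $\zeta$-factors telescope, a function of the form $\big(\prod x_{i,r}\big)\cdot\text{(monomial)}\big/\prod(x_{i,r}-x_{i+1,r'})$, and the image of an ordered PBWD monomial is a symmetrized product of such. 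The crux is a \emph{specialization/leading-term} argument: one linearly orders the relevant basis of $S^{(n)}_{\unl k}$ (say, by the degree of the numerator together with a combinatorial statistic on the exponents), shows the images of the PBWD monomials are triangular with respect to this order with invertible leading coefficients, and concludes they are linearly independent in $S^{(n)}_{\unl k}$. This yields injectivity of $\Psi$ on each graded piece, hence injectivity of $\Psi$.

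For surjectivity, I would argue that the images of the PBWD monomials already span $S^{(n)}_{\unl k}$ over $\BC(\vv)$. The key input is a dimension/size count: one shows that the number of degree-$\unl k$ PBWD monomials of a given ``numerator degree'' matches the dimension of the corresponding associated-graded piece of $S^{(n)}_{\unl k}$ under a filtration by numerator degree, the latter computed via the pole and wheel conditions (this is where~(\ref{pole conditions}) and~(\ref{wheel conditions}) cut down the space of symmetric Laurent polynomials to exactly the right size). Combined with the triangularity from the previous step, a rank argument forces the images to span. Alternatively, and perhaps more cleanly, one verifies that the subalgebra $\on{Im}(\Psi)\subseteq S^{(n)}$ contains the generators $x_{i,1}^r$ and is $\star$-closed, and then shows $S^{(n)}$ is \emph{generated} by $\{x_{i,1}^r\}$ under $\star$ by an induction on $\unl k$: given $F\in S^{(n)}_{\unl k}$, one extracts from its behavior along the diagonals $x_{i,r}=x_{j,r'}$ a ``factorization'' expressing $F$ modulo lower terms as a shuffle product, the wheel conditions ensuring the residues are again legitimate shuffle elements.

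The main obstacle is the surjectivity step, i.e.\ proving that the pole and wheel conditions impose \emph{exactly} enough constraints that $S^{(n)}$ is no larger than $U^>_\vv(L\ssl_n)$ — equivalently, that the PBWD monomials' images span. The triangularity computation controls the numerator degrees coming from $U^>_\vv(L\ssl_n)$, but one must independently pin down $\dim_{\BC(\vv)} S^{(n)}_{\unl k}$ (or its graded pieces), and matching these two counts for all $\unl k$ is the real content; this is precisely the hard combinatorial heart of~\cite{n} and~\cite{t}, and in the write-up I would cite Theorem~\ref{PBWD for RTT integral form} together with the explicit shuffle-element formulas from~\cite{t} rather than reprove the count from scratch.
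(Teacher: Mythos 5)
The paper does not prove this theorem; it is imported from \cite{t} (cf.\ \cite{n}), and your three-stage outline (well-definedness, injectivity via a triangularity/specialization argument on images of PBWD monomials, surjectivity via a spanning argument) is indeed the shape of the proof given there. However, as written your argument has a genuine circularity. You invoke Theorem~\ref{PBWD for RTT integral form}(b2) --- that the ordered PBWD monomials form a $\BC(\vv)$-basis of $U^>_\vv(L\ssl_n)$ --- as ``machinery already in hand,'' but the Remark immediately following Theorem~\ref{shuffle isomorphism} states explicitly that the shuffle isomorphism is what is used in \cite{t} to \emph{establish} parts (b2, c2). So you cannot take (b2) as an input to the proof of the very theorem on which it depends. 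The correct logical order is: (i) prove by direct algebraic manipulation that the ordered PBWD monomials \emph{span} $U^>_\vv(L\ssl_n)$ over $\BC(\vv)$ (this half is independent of the shuffle algebra); (ii) show via the specialization maps $\phi_{\unl{d}}$ that their images in $S^{(n)}$ are linearly independent --- this single computation simultaneously yields the linear-independence half of (b2) \emph{and} the injectivity of $\Psi$; (iii) prove surjectivity. Your triangularity computation is exactly the right tool for (ii), but you must carry it out without presupposing that the PBWD monomials are a basis; only the spanning statement may be assumed.

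Two smaller points. First, your claim that the Serre relations~(\ref{Aff 7}) ``translate into the vanishing guaranteed precisely by the wheel conditions'' is off: the images $x_{i,1}^r$ trivially satisfy the pole and wheel conditions and $S^{(n)}$ is $\star$-closed, so the image of $\Psi$ automatically lands in $S^{(n)}$; the vanishing of the Serre elements is a direct symmetrization identity in $\BS^{(n)}$, not a consequence of~(\ref{wheel conditions}). The wheel conditions do their real work in the opposite direction, in the surjectivity step, where they (together with the pole conditions) cut $\BS^{(n)}$ down to exactly the image --- which you do correctly identify as the hard combinatorial heart of the argument. Second, deferring that surjectivity count to \cite{n} and \cite{t} is reasonable in context, but then the honest statement is that you are citing the theorem rather than proving it, which is in fact all the present paper does.
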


\begin{Rem}
This result was manifestly used in~\cite{t} to establish
parts (b2, c2) of Theorem~\ref{PBWD for RTT integral form}.
\end{Rem}

The proof of Theorem~\ref{shuffle isomorphism} in~\cite{t} crucially utilized the
\emph{specialization maps} $\phi_{\unl{d}}$~\cite[(3.12)]{t}, which we recall next.
For a positive root $\beta=\alpha_j+\alpha_{j+1}+\ldots+\alpha_i$, define
$j(\beta):=j,i(\beta):=i$, and let $[\beta]$ denote the integer interval $[j(\beta);i(\beta)]$.
Consider a collection of the intervals $\{[\beta]\}_{\beta\in \Delta^+}$
each taken with a multiplicity $d_{\beta}\in \BN$ and ordered with respect
to the total ordering~(\ref{order 1}) (the order inside each group is irrelevant).
Define $\unl{\ell}\in \BN^I$ via
  $\sum_{i\in I} \ell_i \alpha_i=\sum_{\beta\in \Delta^+} d_{\beta}\beta$.

Let us now define the \emph{specialization map} $\phi_{\unl{d}}$
(here, $\unl{d}$ denotes the collection $\{d_\beta\}_{\beta\in \Delta^+}$)
\begin{equation}\label{specialization map}
  \phi_{\unl{d}}\colon S^{(n)}_{\unl{\ell}}\longrightarrow
  \BC(\vv)[\{y_{\beta,s}^{\pm 1}\}_{\beta\in \Delta^+}^{1\leq s\leq d_\beta}].
\end{equation}
Split the variables $\{x_{i,r}\}_{i\in I}^{1\leq r\leq \ell_i}$ into
$\sum_{\beta\in \Delta^+} d_\beta$ groups corresponding to the above intervals,
and specialize those in the $s$-th copy of $[\beta]$ to
  $\vv^{-j(\beta)}\cdot y_{\beta,s},\ldots,\vv^{-i(\beta)}\cdot y_{\beta,s}$
in the natural order
(the variable $x_{k,r}$ gets specialized to $\vv^{-k}y_{\beta,s}$). For
  $F=\frac{f(x_{1,1},\ldots,x_{n-1,\ell_{n-1}})}
   {\prod_{i=1}^{n-2}\prod_{1\leq r\leq \ell_i}^{1\leq r'\leq \ell_{i+1}} (x_{i,r}-x_{i+1,r'})}
   \in S^{(n)}_{\unl{\ell}}$,
we define $\phi_{\unl{d}}(F)$ as the corresponding specialization of $f$.
Note that $\phi_{\unl{d}}(F)$ is independent of our splitting of the
variables $\{x_{i,r}\}_{i\in I}^{1\leq r\leq \ell_i}$ into groups and is
symmetric in $\{y_{\beta,s}\}_{s=1}^{d_\beta}$ for any $\beta$.

Following~\cite[Definition 8.6]{t}, an element $F\in S^{(n)}_{\unl{k}}$
is called \textbf{good} if the following holds:

\noindent
$\bullet$ $F$ is of the form~(\ref{pole conditions}) with
  $f\in \BC[\vv,\vv^{-1}][\{x_{i,r}^{\pm 1}\}_{i\in I}^{1\leq r\leq k_i}]^{\Sigma_{\unl{k}}}$;

\noindent
$\bullet$ $\phi_{\unl{d}}(F)$ is divisible by
  $(\vv-\vv^{-1})^{\sum_{\beta\in \Delta^+} d_\beta(i(\beta)-j(\beta))}$
for any $\unl{d}$ such that $\sum_{i\in I} k_i\alpha_i=\sum_{\beta\in \Delta^+} d_\beta \beta$.

Let $\sS^{(n)}_{\unl{k}}\subset S^{(n)}_{\unl{k}}$ denote the
$\BC[\vv,\vv^{-1}]$-submodule of all \emph{good} elements. Set
  $\sS^{(n)}:=\underset{\unl{k}\in \BN^I}\bigoplus \sS^{(n)}_{\underline{k}}$.
%It is related to $\sU^>_\vv(L\ssl_n)$ via the following result~\cite[Theorem 8.8]{t}:

\begin{Thm}\cite[Theorem 8.8]{t}\label{shuffle Grojnowski isomorphism}
The $\BC(\vv)$-algebra isomorphism $\Psi\colon U_\vv^{>}(L\ssl_n)\iso S^{(n)}$
of Theorem~\ref{shuffle isomorphism} gives rise to a $\BC[\vv,\vv^{-1}]$-algebra
isomorphism $\Psi\colon \sU_\vv^{>}(L\ssl_n)\iso \sS^{(n)}$.
\end{Thm}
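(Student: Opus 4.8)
The plan is to establish the two inclusions $\Psi(\sU^{>}_\vv(L\ssl_n)) \subseteq \sS^{(n)}$ and $\sS^{(n)} \subseteq \Psi(\sU^{>}_\vv(L\ssl_n))$ separately, both by exploiting the PBWD bases: the bases of $\sU^{>}_\vv(L\ssl_n)$ from Theorem~\ref{PBWD for Lusztig integral form} on the algebra side, and a compatible ``good'' basis of $\sS^{(n)}$ on the shuffle side. First I would record the image under $\Psi$ of the distinguished elements: one computes directly from $e_{i,r}\mapsto x_{i,1}^r$ and the definition of $\star$ that $\Psi(e_{\beta,r})$ is, up to a monomial and the pole denominator in~\eqref{pole conditions}, a product of the linear factors $(x_{k,a}-\vv^{\mp 1}x_{k+1,a})$ coming from the iterated $\vv$-commutators in~\eqref{integral higher roots 2}; and $\Psi(\se^{(k)}_{\beta,r}) = \Psi(e_{\beta,r})^{\star k}/[k]_\vv!$. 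Then I would show each such divided-power shuffle element is \textbf{good}: it is manifestly of the form~\eqref{pole conditions} with a Laurent-polynomial numerator over $\BC[\vv,\vv^{-1}]$ (the $[k]_\vv!$ denominator is absorbed because the symmetrization in~\eqref{shuffle product} produces a matching $[k]_\vv!$ from the $k!$ permutations times the $\zeta$-factors evaluated at the relevant specialization), and the required $(\vv-\vv^{-1})$-divisibility of $\phi_{\unl d}$ follows from the behaviour of the linear factors under the specialization $x_{k,r}\mapsto \vv^{-k}y_{\beta,s}$: each interval of length $i(\beta)-j(\beta)+1$ contributes exactly $i(\beta)-j(\beta)$ factors that specialize to a multiple of $\vv-\vv^{-1}$. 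Since $\sS^{(n)}$ is a $\BC[\vv,\vv^{-1}]$-subalgebra (one checks $\star$-closedness of good elements using the compatibility of $\phi_{\unl d}$ with $\star$, which is part of~\cite{t}), this gives $\Psi(\sU^{>}_\vv(L\ssl_n))\subseteq \sS^{(n)}$.

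For the reverse inclusion I would argue that the ordered PBWD monomials $\Psi\big(\prod^{\rightarrow}\se^{(k_{\beta,r})}_{\beta,r}\big)$ span $\sS^{(n)}$ over $\BC[\vv,\vv^{-1}]$. The key device is the family of specialization maps $\phi_{\unl d}$ together with the leading-term / triangularity analysis already used in~\cite{t} to prove Theorem~\ref{shuffle isomorphism} and parts (b1,b2) of Theorem~\ref{PBWD for RTT integral form}. Concretely, order the PBWD monomials by their multidegree $\unl d$ and a secondary ordering on the $\BZ$-exponents; one shows that for a given $\unl d$ with $\sum k_i\alpha_i = \sum d_\beta\beta$, the composition $\phi_{\unl d}\circ\Psi$ applied to the top-degree part of a PBWD monomial yields, after dividing by $(\vv-\vv^{-1})^{\sum d_\beta(i(\beta)-j(\beta))}$, a symmetric Laurent polynomial in the $y_{\beta,s}$ whose ``leading'' monomial is controlled by the exponents $r$, and these leading monomials are linearly independent over $\BC[\vv,\vv^{-1}]$ (they are essentially products of Schur-type symmetric functions, one block per $\beta$). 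Hence given any good $F\in\sS^{(n)}_{\unl k}$ one peels off PBWD monomials one block-degree at a time: the top $\unl d$-component of $F$ is matched by a $\BC[\vv,\vv^{-1}]$-combination of PBWD monomials because \emph{goodness} is exactly the condition that $\phi_{\unl d}(F)$ lies in the span of $(\vv-\vv^{-1})^{\bullet}\cdot(\text{integral leading monomials})$; subtracting and inducting on $\unl d$ (the induction terminates since lower $\unl d$ means strictly fewer variables) exhausts $F$. Linear independence of these monomials over $\BC[\vv,\vv^{-1}]$ follows from their linear independence over $\BC(\vv)$, which is Theorem~\ref{shuffle isomorphism} combined with Theorem~\ref{PBWD for RTT integral form}(b2).

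Finally I would assemble the two inclusions: $\Psi$ restricts to an injective $\BC[\vv,\vv^{-1}]$-algebra homomorphism $\sU^{>}_\vv(L\ssl_n)\to\sS^{(n)}$ (injectivity is inherited from the $\BC(\vv)$-level isomorphism of Theorem~\ref{shuffle isomorphism}), its image lands in $\sS^{(n)}$ by the first part, and it is surjective onto $\sS^{(n)}$ by the spanning argument of the second part; therefore $\Psi\colon \sU^{>}_\vv(L\ssl_n)\iso\sS^{(n)}$. I expect the main obstacle to be the surjectivity direction — specifically, proving that ``good'' is not merely a necessary condition for lying in the image but a sufficient one. This is where the full strength of the specialization-map technology of~\cite{t} is needed: one must show that the divisibility constraints defining $\sS^{(n)}$ are tight enough to force a good element to be an integral (rather than merely rational-in-$\vv$) combination of PBWD monomials, which amounts to a careful bookkeeping of how the $(\vv-\vv^{-1})$-powers distribute across the different specializations $\phi_{\unl d}$ simultaneously, and an argument that no ``extra'' cancellations can lower the $\vv$-denominators below what the divided powers $\se^{(k)}_{\beta,r}$ already account for. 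The rest — computing $\Psi(e_{\beta,r})$, checking goodness of divided powers, $\star$-closedness of $\sS^{(n)}$ — is routine given the formulas in Section~\ref{ssec usual shuffle algebra} and the results quoted from~\cite{t}.
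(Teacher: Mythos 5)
The paper does not prove this statement: it is quoted verbatim from \cite[Theorem 8.8]{t}, so there is no in-paper argument to compare against. Your outline does reproduce the strategy of the cited proof (map the PBWD monomials in the divided powers $\se^{(k)}_{\beta,r}$ into $\sS^{(n)}$, then show they span $\sS^{(n)}$ over $\BC[\vv,\vv^{-1}]$ by a triangularity argument with the specialization maps $\phi_{\unl d}$), but two steps as you state them have real gaps.

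First, \emph{goodness} of $F\in S^{(n)}_{\unl k}$ requires divisibility of $\phi_{\unl d}(F)$ by $(\vv-\vv^{-1})^{\sum_\beta d_\beta(i(\beta)-j(\beta))}$ for \emph{every} collection $\unl d$ with $\sum_i k_i\alpha_i=\sum_\beta d_\beta\beta$, not only for the $\unl d$ matching the interval decomposition built into the monomial. Your check of the linear factors under $x_{k,r}\mapsto \vv^{-k}y_{\beta,s}$ only addresses that one ``diagonal'' specialization; already for a single $\se^{(k)}_{\beta,r}$ with $\beta$ non-simple there are many other admissible $\unl d$ (other ways of writing $k\beta$ as a sum of positive roots), and the required divisibility there is a genuinely separate computation. (Closure of the good elements under $\star$, which you invoke, does not substitute for this: it reduces products to generators, but each generator must still be tested against all $\unl d$.) Second, in the spanning argument your termination claim --- ``lower $\unl d$ means strictly fewer variables'' --- is false: every $\unl d$ in the induction satisfies $\sum_\beta d_\beta\beta=\sum_i k_i\alpha_i$, so the number of variables is the same throughout. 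The induction actually runs over the (finite, totally ordered) set of such collections $\unl d$, and what makes it work is the triangularity property that $\phi_{\unl d}$ annihilates the PBWD monomials attached to collections smaller than $\unl d$ in that order; without stating and using this vanishing, the ``peeling off'' step is not justified. With these two points repaired, your plan matches the proof in \cite{t}.
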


%%%%%%%%%%%%%%%%%%%%%%%%%%%%%%% From Original Version %%%%%%%%%%%%%%%%%%%%%%
%\begin{Cor}
%$\sS^{(n)}$ is an integral form of the shuffle algebra $S^{(n)}$.
%\end{Cor}
%%%%%%%%%%%%%%%%%%%%%%%%%%%%%%%%%%%%%%%%%%%%%%%%%%%%%%%%%%%%%%%%%%%%%%%%%%%%

\begin{Rem}\label{shuffle RTT isomorphism}
In~\cite[Theorem 3.34]{t}, we also established the shuffle realization of
$\fU_\vv^{>}(L\ssl_n)$ by showing that the isomorphism $\Psi$ of
Theorem~\ref{shuffle isomorphism} gives rise to a $\BC[\vv,\vv^{-1}]$-algebra
isomorphism $\Psi\colon \fU_\vv^{>}(L\ssl_n)\iso \fS^{(n)}$, where $\fS^{(n)}$
denotes the $\BC[\vv,\vv^{-1}]$-submodule of all \textbf{integral} elements,
see~\cite[Definition 3.31]{t}. We skip the definition of the latter as it is
not presently needed.
\end{Rem}

    %%%%%%%%%%%%%%%%%%%%%%%%%%%%%%%%%%%%%%%%%%%%%%%%%%%%%%%%%%%%%%%%%%%%%%%%%
    %%%%%%%%%%%%%%%%%%%%%% Extended Shuffle Algebra %%%%%%%%%%%%%%%%%%%%%%%%%
    %%%%%%%%%%%%%%%%%%%%%%%%%%%%%%%%%%%%%%%%%%%%%%%%%%%%%%%%%%%%%%%%%%%%%%%%%

\subsection{Extended shuffle algebra $S^{(n),\geq}$}\label{ssec extended shuffle algebra}
\

For the purpose of the next section, define the \emph{extended shuffle algebra}
(cf.~\cite[\S3.4]{n}) $S^{(n),\geq}$ by adjoining pairwise commuting generators
$\{\psi^-_{i,-s},(\psi^-_{i,0})^{-1}\}_{i\in I}^{s\in \BN}$ with the following relations:
\begin{equation}\label{cartan shuffle action}
  \psi^-_i(z)\star F =
  \left[F\left(\{x_{j,r}\}_{j\in I}^{1\leq r\leq k_j}\right)\cdot \prod_{j\in I}\prod_{r=1}^{k_j}
        \frac{\zeta_{i,j}(z/x_{j,r})}{\zeta_{j,i}(x_{j,r}/z)} \right]\star \psi^-_i(z)
\end{equation}
for any $F\in S^{(n)}_{\unl{k}}$, where we set
$\psi^-_i(z):=\sum_{s\geq 0} \psi^-_{i,-s}z^s$,
$\star$ denotes the multiplication in $S^{(n),\geq}$,
and the $\zeta$-factors in the right-hand side are all
expanded in the non-negative powers of $z$.

Then, the isomorphism $\Psi$ of Theorem~\ref{shuffle isomorphism} naturally
extends to a $\BC(\vv)$-algebra isomorphism
\begin{equation}\label{extended shuffle isomorphism}
  \Psi\colon U^\geq_\vv(L\ssl_n)\iso S^{(n),\geq}
  \ \ \mathrm{with}\ \ \psi^-_{i,-s}\mapsto \psi^-_{i,-s}.
\end{equation}
Evoking the new Drinfeld Hopf algebra structure on $U^\geq_\vv(L\ssl_n)$
of Section~\ref{ssec new Drinfeld pairing},~(\ref{extended shuffle isomorphism})
induces the one on $S^{(n),\geq}$. The corresponding coproduct $\Delta$
is given by (cf.~\cite[Proposition 3.5]{n}):
\begin{equation}\label{coproduct Cartan}
  \Delta(\psi^-_i(z))=\psi^-_i(z)\otimes \psi^-_i(z),
\end{equation}
\begin{equation}\label{coproduct shuffle}
  \Delta(F)=\sum_{\unl{\ell}\in \BN^I}^{\unl{\ell}\leq \unl{k}}
  \frac{\left[\prod_{i\in I}\prod_{r>\ell_i} \psi^-_i(x_{i,r})\right]
        \star F(x_{i,r\leq \ell_i}\otimes x_{i,s>\ell_i})}
       {\prod_{i,j\in I}\prod_{r\leq \ell_i}^{s> \ell_j} \zeta_{j,i}(x_{j,s}/x_{i,r})}
%  \ \mathrm{for}\  F\in S^{(n)}_{\unl{k}},
\end{equation}
for $F\in S^{(n)}_{\unl{k}}$, where
$\unl{\ell}\leq \unl{k}$ iff $\ell_i\leq k_i$ for all $i$.
We expand the right-hand side of~(\ref{coproduct shuffle}) in the
non-negative powers of $x_{j,s}/x_{i,r}$ for $s>\ell_j$ and $r\leq \ell_i$,
put the symbols $\psi^-_{i,-s}$ to the very left, then all powers
of $x_{i,r}$ with $r\leq \ell_i$, then the $\otimes$ sign,
and finally all powers of $x_{i,r}$ with $r>\ell_i$.

%%%%%%%%%%%%%%%%%%%%%%%% FROM OLD VERSION %%%%%%%%%%%%%%%%%%%%%%%%%%%%%%%%%%%%%%
%, and $\zeta^{-1}_{j,i}(x_{j,s}/x_{i,r})$ in~(\ref{coproduct shuffle})
%are expanded in non-negative powers of $x_{i,r}/x_{j,s}$.
%We refer the reader to~\cite[Remark 3.6]{n} for the further details.
%%%%%%%%%%%%%%%%%%%%%%%%%%%%%%%%%%%%%%%%%%%%%%%%%%%%%%%%%%%%%%%%%%%%%%%%%%%%%%%%

    %%%%%%%%%%%%%%%%%%%%%%%%%%%%%%%%%%%%%%%%%%%%%%%%%%%%%%%%%%%%%%%%%%%%%%%%%
    %%%%%%%%%%%%%%%%%%%%%%%%%%%%%%%%%%%%%%%%%%%%%%%%%%%%%%%%%%%%%%%%%%%%%%%%%
    %%%%%%%%%%%%%%%%%%%%%%%%%%% Main Result %%%%%%%%%%%%%%%%%%%%%%%%%%%%%%%%%
    %%%%%%%%%%%%%%%%%%%%%%%%%%%%%%%%%%%%%%%%%%%%%%%%%%%%%%%%%%%%%%%%%%%%%%%%%
    %%%%%%%%%%%%%%%%%%%%%%%%%%%%%%%%%%%%%%%%%%%%%%%%%%%%%%%%%%%%%%%%%%%%%%%%%

\section{Main result}\label{sec main result}

The main result of this note is the duality of the integral forms
$\sU_\vv(L\ssl_n)$ and $\fU_\vv(L\ssl_n)$ with respect to the
$\BC(\vv)$-valued new Drinfeld pairing $\varphi$ on $U_\vv(L\ssl_n)$
of Proposition~\ref{Drinfeld double sln}:

\begin{Thm}\label{Main Theorem}
(a)
  $\sU^>_\vv(L\ssl_n)=\{x\in U^>_\vv(L\ssl_n)|
   \varphi(x,y)\in \BC[\vv,\vv^{-1}]\ \mathrm{for\ all}\ y\in \fU^<_\vv(L\ssl_n)\}$.

\noindent
(b)
  $\sU^<_\vv(L\ssl_n)=\{y\in U^<_\vv(L\ssl_n)|
   \varphi(x,y)\in \BC[\vv,\vv^{-1}]\ \mathrm{for\ all}\ x\in \fU^>_\vv(L\ssl_n)\}$.
\end{Thm}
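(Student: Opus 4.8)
The plan is to transport the entire problem to the shuffle algebra side, where the pairing becomes an explicit residue/constant-term integral, and then compare the two integral forms $\sS^{(n)}$ (good elements) and $\fS^{(n)}$ (integral elements) directly. Since the pairing $\varphi$ restricted to $U^>_\vv(L\ssl_n)\times U^<_\vv(L\ssl_n)$ is non-degenerate (Proposition~\ref{Drinfeld double sln}) and $U^<_\vv(L\ssl_n)$ is anti-isomorphic to $U^>_\vv(L\ssl_n)$, it suffices to prove (a); (b) then follows by the symmetry recorded in Remark~\ref{Opposite duality} (apply the anti-automorphism swapping $e$'s and $f$'s). First I would use the coproduct formulas~(\ref{coproduct Cartan},~\ref{coproduct shuffle}) on the extended shuffle algebra together with the defining axioms~(\ref{Hopf pairing 1},~\ref{Hopf pairing 2}) of a Hopf pairing to derive a closed shuffle-theoretic formula for $\varphi(F, F')$ where $F\in S^{(n)}_{\unl{k}}$ corresponds to $x\in U^>_\vv$ and $F'$ to $y\in U^<_\vv$: unwinding the iterated coproduct against the elementary pairings in~(\ref{pairing formulas}) expresses $\varphi(F,F')$ as a symmetrized sum (equivalently a constant term in the variables $x_{i,r}$) of $F\cdot \bar F'$ weighted by products of the $\zeta_{i,j}$-kernels, exactly the kind of formula appearing in Negu\c{t}'s work (cf.~\cite{n}). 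This reduces the theorem to the statement: $\sS^{(n)}$ is precisely the set of shuffle elements $F$ (in $S^{(n)}$) whose pairing against every element of $\fS^{(n)}$ lies in $\BC[\vv,\vv^{-1}]$.

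The next step is to evaluate this pairing formula on PBWD monomials. On the $U^<$-side I would run over the PBWD basis of $\fU^<_\vv(L\ssl_n)$ from Theorem~\ref{PBWD for RTT integral form}(c1) — and here is where Remark~\ref{importance of all decompositions} enters: I must use the freedom to choose the decomposition $\unl{r}(\beta,r)$, so that the monomials $\wt f_{\beta,\unl r}$ are adapted to whichever specialization $\phi_{\unl d}$ is under consideration. The key computational lemma will be that pairing a good element $F$ against the dual PBWD generator $\wt f_{\beta,r}$ (or a product thereof) is governed, up to units in $\BC[\vv,\vv^{-1}]$, by the specialization maps $\phi_{\unl d}(F)$: the repeated $q$-commutators defining $\wt e_{\beta,\unl r}$ in~(\ref{integral higher roots}) are designed precisely so that, under $\Psi$, pairing against them computes the iterated residue that produces $\phi_{\unl d}$. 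Thus $\varphi(F, \wt f_{\beta,\unl r})$ integral for all choices of decomposition and all $\beta$ forces divisibility of $\phi_{\unl d}(F)$ by $(\vv-\vv^{-1})^{\sum d_\beta(i(\beta)-j(\beta))}$, which is exactly the definition of a good element; conversely, goodness of $F$ plus the integrality (i.e.\ membership in $\fS^{(n)}$) of the shuffle image of $y$ makes every term in the pairing formula integral.

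Concretely I would organize (a) as a two-sided inclusion. For ``$\subseteq$'': take $x\in\sU^>_\vv$, so $\Psi(x)\in\sS^{(n)}$ is good; expand any $y\in\fU^<_\vv$ in the PBWD basis with coefficients in $\BC[\vv,\vv^{-1}]$, apply the pairing formula term by term, and check that each elementary pairing $\varphi(\text{ordered }\se\text{-monomial}, \wt f_{\beta,\unl r})$ is integral — this is where the divisibility built into ``good'' is consumed, matched against the normalizing factor $(\vv-\vv^{-1})$ in~(\ref{integral higher roots}). For ``$\supseteq$'': suppose $\varphi(x,y)\in\BC[\vv,\vv^{-1}]$ for all $y\in\fU^<_\vv$; test $x$ against the generators $\wt f_{\beta,\unl r}$ and their products over all admissible decompositions $\unl r$, read off via the above lemma that all the specializations $\phi_{\unl d}(\Psi(x))$ satisfy the required divisibility, and separately that $\Psi(x)$ has Laurent-polynomial (rather than merely rational in $\vv$) coefficients — the latter by pairing against a $\BC[\vv,\vv^{-1}]$-basis of the degree-$\unl k$ piece of $\fS^{(n)}$, using non-degeneracy — hence $\Psi(x)\in\sS^{(n)}$, i.e.\ $x\in\sU^>_\vv$ by Theorem~\ref{shuffle Grojnowski isomorphism}. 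The main obstacle I anticipate is the precise bookkeeping in the second inclusion: translating ``integral against all PBWD monomials for all decompositions'' into the clean divisibility condition defining good elements requires a careful triangularity/leading-term argument relating the PBWD monomials $\wt f_{\beta,\unl r}$ under $\varphi$ to the $y_{\beta,s}$-monomials appearing in $\phi_{\unl d}$, and making sure no cross-terms from lower-order PBWD monomials spoil the extraction — this is exactly the point where having the entire family of decompositions available (Remark~\ref{importance of all decompositions}) is essential rather than cosmetic.
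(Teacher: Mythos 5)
Your strategy coincides with the paper's: prove (a) only, transport everything to the shuffle side, derive a closed formula for $\varphi$ against products of the currents $\wt{f}_{j;i}$ from the coproduct \refe{coproduct shuffle} and the Hopf-pairing axiom \refe{Hopf pairing 1}, and then match the resulting divisibility conditions on $\phi_{\unl{d}}(\Psi(x))$ against the definition of \emph{good} elements, using the full family of decompositions $\unl{r}(\beta,r)$ exactly as in Remark~\ref{importance of all decompositions}. The outline is correct, and your identification of where the difficulty sits is accurate.

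That said, the two points you defer are precisely where the paper's actual work lies, and your sketch does not yet resolve them. First, the pairing $\varphi(x,\wt{f}_{j;i}(z_j,\dots,z_i))$ is not directly ``the specialization $\phi_{\unl{d}}$'': expanding the iterated $\vv$-commutators produces a sum over the $2^{i-j}$ orientations of a path graph (Lemma~\ref{n>2 general pairing 2}), and the clean identity \refe{key specialization} relating the pairing to $p(z,\vv^{-1}z,\dots,\vv^{j-i}z)$ is obtained only after extracting the coefficient of $\prod_k z_k^{-r_k}$ for \emph{sufficiently negative} $r_{j+1},\dots,r_i$, which kills all but one orientation; this coefficient-extraction step is what makes the ``leading-term/triangularity'' argument you anticipate actually go through, in both inclusions. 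Second, for the inclusion $\sU^>_\vv(L\ssl_n)\subseteq\{\dots\}$ your plan ``expand $y$ in the PBWD basis and check each elementary pairing'' is not term-by-term immediate: goodness of $\Psi(x)$ only controls the pairing with $\wt{f}_{\beta,\unl{r}}$ for $x$-adapted (sufficiently small) decompositions $\unl{r}$, so an arbitrary $\wt{f}_{\beta,\unl{r}}$ must first be re-expanded, via Theorem~\ref{PBWD for RTT integral form}(c1) with adapted decompositions, into those generators plus higher-degree monomials in shorter root vectors, and the latter are handled by an induction on $i(\beta)-j(\beta)$ together with the coproduct-stability of $\sU^>_\vv(L\ssl_n)$ (Lemma~\ref{Grojnowski coproduct closed}), which reduces products back to single root currents. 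Your proposal names the right ingredients but stops short of this induction, which is the essential content of Proposition~\ref{inclusion other way}.
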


\begin{proof}
We shall prove only part (a) as the proof of part (b) is completely analogous.
Our proof is crucially based on the PBWD result for $\fU^<_\vv(L\ssl_n)$,
Theorem~\ref{PBWD for RTT integral form}(c1), the shuffle realization of
$\sU^>_\vv(L\ssl_n)$, Theorem~\ref{shuffle Grojnowski isomorphism}, and
the shuffle realization~(\ref{coproduct shuffle}) of the new Drinfeld coproduct.
We will first establish Theorem~\ref{Main Theorem}(a) for $n=2$, and then generalize
our arguments to $n>2$.

\medskip
\noindent
\underline{\emph{Case n=2}}.
For $n=2$, we shall skip the first index $i$.
Set $\wt{f}(z):=(\vv-\vv^{-1})f(z)=\sum_{r\in \BZ}\wt{f}_rz^{-r}$.
Then, Theorem~\ref{Main Theorem}(a) is equivalent to:
\begin{equation}\label{n=2 equivalence}
  \sU^>_\vv(L\ssl_2)=
  \left\{x\in U^>_\vv(L\ssl_2):\varphi\left(x,\wt{f}(z_1)\cdots\wt{f}(z_N)\right)\in \BC[\vv,\vv^{-1}][[z_1^{\pm 1},\ldots,z_N^{\pm 1}]] \ \forall\ N\right\}.
\end{equation}
The algebra $U_\vv(L\ssl_2)$ is $\BZ$-graded via
$\deg(e_r)=1, \deg(f_r)=-1, \deg(\psi^\pm_{\pm s})=0$
for any $r\in \BZ,s\in \BN$. In particular,
  $U^>_\vv(L\ssl_2)=\oplus_{k\in \BN} U^>_\vv(L\ssl_2)[k]$
with $U^>_\vv(L\ssl_2)[k]$ consisting of all degree $k$ elements.
Due to~(\ref{pairing formulas}), the new Drinfeld pairing $\varphi$
is \emph{of degree zero}, that is
\begin{equation}\label{n=2 degree zero pairing}
  \varphi(x,y)=0\
  \mathrm{for\ homogeneous\ elements}\ x,y\ \mathrm{with}\ \deg(x)+\deg(y)\ne 0.
\end{equation}

Since $U^>_\vv(L\ssl_2)[1]$ is spanned by $\{e_r\}_{r\in \BZ}$ and
  $\varphi(e_r,\wt{f}(z_1))=z_1^r=\Psi(e_r)_{\mid x_1\mapsto z_1}$,
we get
\begin{equation}\label{n=2 basic pairing}
  \varphi\left(x,\wt{f}(z_1)\right)=\Psi(x)_{\mid x_1\mapsto z_1}
  \ \ \mathrm{for\ any}\ \ x\in U^>_\vv(L\ssl_2)[1].
\end{equation}
Combining~(\ref{n=2 basic pairing}) with the shuffle
formulas~(\ref{coproduct Cartan},~\ref{coproduct shuffle}) for the
new Drinfeld coproduct $\Delta$ and the property~(\ref{Hopf pairing 1}),
we obtain the general formula for the pairing with $\wt{f}(z_1)\cdots\wt{f}(z_N)$:

\begin{Lem}\label{n=2 general pairing}
For $x\in U^>_\vv(L\ssl_2)[k]$, we have
\begin{equation}\label{n=2 pairing}
  \varphi\left(x,\wt{f}(z_1)\cdots\wt{f}(z_N)\right)=
  \delta_{k,N}\cdot \Psi(x)_{\mid x_r\mapsto z_r}\cdot \prod_{1\leq r<s\leq N}\zeta^{-1}(z_r/z_s)
\end{equation}
with the factors $\zeta^{-1}(z_r/z_s)$ expanded in the non-negative powers of $z_s/z_r$.
\end{Lem}

\begin{proof}
Due to~(\ref{n=2 degree zero pairing}), we have
  $\varphi\left(x,\wt{f}(z_1)\cdots\wt{f}(z_N)\right)=0$ if $k\ne N$.
Henceforth, we will assume $k=N$. Set $F:=\Psi(x)\in S^{(2)}_N$,
so that $F=F(x_1,\ldots,x_N)$ is a symmetric Laurent polynomial.

Due to the property~(\ref{Hopf pairing 1}), we have
\begin{equation}\label{pairing step 0}
  \varphi\left(x,\wt{f}(z_1)\cdots\wt{f}(z_N)\right)=
  \varphi\left(\Delta^{(N-1)}(x),\wt{f}(z_1)\otimes \cdots\otimes \wt{f}(z_N)\right),
\end{equation}
where
  $\Delta^{(\ell)}\colon U^\geq_\vv(L\ssl_n)\to {U^\geq_\vv(L\ssl_n)}^{\otimes (\ell+1)}
   \ (\ell\in \BZ_{>0})$
are defined inductively via
\begin{equation*}
  \Delta^{(1)}:=\Delta\ \mathrm{and}\
  \Delta^{(\ell)}:=(\Delta\otimes \mathrm{Id}^{\otimes(\ell-1)})\circ \Delta^{(\ell-1)}
  \ \mathrm{for}\ \ell\geq 2.
\end{equation*}
Evoking the formulas~(\ref{coproduct Cartan},~\ref{coproduct shuffle})
and the property~(\ref{n=2 degree zero pairing}), we obtain
\begin{equation}\label{pairing step 1}
  \varphi\left(\Delta^{(N-1)}(x),\wt{f}(z_1)\otimes \cdots\otimes \wt{f}(z_N)\right)=
  \varphi\left(\Psi^{-1}(G),\wt{f}(z_1)\otimes \cdots\otimes \wt{f}(z_N)\right),
\end{equation}
where
\begin{equation}\label{pairing step 2}
  G=
  \frac{(\prod_{r=2}^N \psi^-(x_r)\otimes \prod_{r=3}^N \psi^-(x_r)\otimes \cdots \otimes \psi^-(x_N)\otimes 1)
         \star F(x_1\otimes x_2\otimes\cdots \otimes x_N)}
       {\prod_{1\leq r<s\leq N} \zeta(x_s/x_r)}.
\end{equation}

Recalling the properties~(\ref{Hopf pairing 1},~\ref{n=2 degree zero pairing})
and the formula~(\ref{n=2 basic pairing}), we get
\begin{equation}\label{pairing step 3}
\begin{split}
    & \varphi\left(\psi^-(t_1)\cdots\psi^-(t_\ell)x,\wt{f}(z_1)\right)=\\
    & \prod_{r=1}^\ell \varphi\left(\psi^-(t_r),\psi^+(z_1)\right)\cdot \varphi\left(x,\wt{f}(z_1)\right)=
      \prod_{r=1}^\ell \frac{\zeta(t_r/z_1)}{\zeta(z_1/t_r)}\cdot \Psi(x)_{\mid x_1\mapsto z_1}
\end{split}
\end{equation}
for $x\in U^>_\vv(L\ssl_2)[1]$,
with the right-hand side expanded in the non-negative powers of $t_r/z_1$.
Combining~(\ref{pairing step 0})--(\ref{pairing step 3}), we finally obtain
\begin{equation}\label{pairing step 4}
  \varphi\left(x,\wt{f}(z_1)\cdots\wt{f}(z_N)\right)=
  \Psi(x)_{\mid x_r\mapsto z_r}\cdot \prod_{1\leq r<s\leq N}\zeta^{-1}(z_r/z_s).
\end{equation}
This completes our proof of Lemma~\ref{n=2 general pairing}.
\end{proof}

Thus, the $\BC[\vv,\vv^{-1}]$-submodule of $U^>_\vv(L\ssl_2)$ defined by
the right-hand side of~(\ref{n=2 equivalence}) is $\BN$-graded. Moreover,
$x\in U^>_\vv(L\ssl_2)[k]$ satisfies
  $\varphi(x,\wt{f}(z_1)\cdots\wt{f}(z_N))\in
   \BC[\vv,\vv^{-1}][[z_1^{\pm 1},\ldots,z_N^{\pm 1}]]$
for all $N$ if and only if
  $\Psi(x)\in \BC[\vv,\vv^{-1}][x_1^{\pm 1},\ldots,x_k^{\pm 1}]$.
The latter is equivalent to the inclusion $x\in \sU^>_\vv(L\ssl_2)$,
due to Theorem~\ref{shuffle Grojnowski isomorphism}
(as all specialization maps $\phi_\ast$ of~(\ref{specialization map})
are trivial for $n=2$).

This completes our proof of Theorem~\ref{Main Theorem}(a)
in the smallest rank case $n=2$.

\medskip
\noindent
\underline{\emph{Case $n>2$}}. For any $1\leq j\leq i\leq n-1$,
define the series
\begin{equation}\label{higher root generating series}
  \wt{f}_{j;i}(z_j,\ldots,z_i):=(\vv-\vv^{-1})
  [\cdots[[f_{j}(z_j),f_{j+1}(z_{j+1})]_\vv,f_{j+2}(z_{j+2})]_\vv,\cdots,f_{i}(z_i)]_\vv.
\end{equation}
Note that
  $\wt{f}_{j;i}(z_j,\ldots,z_i)\in \fU^<_\vv(L\ssl_n)[[z_j^{\pm 1},\ldots,z_i^{\pm 1}]]$,
and its coefficients encode
  $\wt{f}_{\alpha_j+\ldots+\alpha_i,\unl{r}}$ of~(\ref{integral higher roots})
for all possible decompositions $\unl{r}\in \BZ^{i-j+1}$.
For $i=j$, we shall denote $\wt{f}_{j;j}(z)$ simply by $\wt{f}_{j}(z)$, so that
  $\wt{f}_{j;i}(z_j,\ldots,z_i):=(\vv-\vv^{-1})^{j-i}
   [\cdots[[\wt{f}_{j}(z_j),\wt{f}_{j+1}(z_{j+1})]_\vv,\wt{f}_{j+2}(z_{j+2})]_\vv\cdots,\wt{f}_{i}(z_i)]_\vv$.
Similar to the $n=2$ case treated above, our primary goal is to compute
the new Drinfeld pairing with products of these $\wt{f}_{j;i}(z_j,\ldots,z_i)$.

The algebra $U_\vv(L\ssl_n)$ is $\BZ^I$-graded via
$\deg(e_{i,r})=1_i, \deg(f_{i,r})=-1_i, \deg(\psi^\pm_{i,\pm s})=\unl{0}$
for all $i\in I, r\in \BZ, s\in \BN$, where $\unl{0}=(0,\ldots,0)$ and
$1_i=(0,\ldots,1,\ldots,0)$ with $1$ placed at the $i$-th spot. In particular,
  $U^>_\vv(L\ssl_n)=\oplus_{\unl{k}\in \BN^I} U^>_\vv(L\ssl_n)[\unl{k}]$
with $U^>_\vv(L\ssl_n)[\unl{k}]$ consisting of all degree $\unl{k}$ elements.
Due to~(\ref{pairing formulas}), the new Drinfeld pairing $\varphi$ is
\emph{of degree zero}, that is
\begin{equation}\label{n>2 degree zero pairing}
  \varphi(x,y)=0\
  \mathrm{for\ homogeneous\ elements}\ x,y\ \mathrm{with}\ \deg(x)+\deg(y)\ne \unl{0}.
\end{equation}

Similar to~(\ref{n=2 basic pairing}), we obtain
\begin{equation}\label{n>2 basic pairing}
  \varphi\left(x,\wt{f}_{j}(z_j)\right)=\Psi(x)_{\mid x_{j,1}\mapsto z_j}
  \ \ \mathrm{for\ any}\ \ x\in U^>_\vv(L\ssl_n)[1_j].
\end{equation}
The following result generalizes~(\ref{n>2 basic pairing}) and is proved
completely analogously to Lemma~\ref{n=2 general pairing}:

\begin{Lem}\label{n>2 general pairing 1}
For $x\in U^>_\vv(L\ssl_n)[\unl{k}]$ and any collection $j_1,\ldots,j_N\in I$,
we have
\begin{equation*}\label{n>2 pairing}
  \varphi\left(x,\wt{f}_{j_1}(z^{(1)}_{j_1})\cdots \wt{f}_{j_N}(z^{(N)}_{j_N})\right)=
  \delta_{\unl{k},1_{j_1}+\ldots+1_{j_N}}\cdot
  \Psi(x)_{\mid x_{i,r}\mapsto z^{(\ast)}_{j_\ast}}\cdot
  \prod_{1\leq r<s\leq N}\zeta^{-1}_{j_r,j_s} \left(z^{(r)}_{j_r}/z^{(s)}_{j_s}\right)
\end{equation*}
with the factors $\zeta^{-1}_{j_r,j_s}(z^{(r)}_{j_r}/z^{(s)}_{j_s})$
expanded in the non-negative powers of $z^{(s)}_{j_s}/z^{(r)}_{j_r}$.
\end{Lem}

\begin{Rem}\label{explanation 1}
The specialization $\Psi(x)_{\mid x_{i,r}\mapsto z^{(\ast)}_{j_\ast}}$ in
Lemma~\ref{n>2 general pairing 1} should be understood as follows.
For each $i\in I$, there are $k_i$ variables $\{x_{i,r}\}_{r=1}^{k_i}$
(``of color $i$'') featuring in $\Psi(x)$. Since $k_i=\#\{1\leq t\leq N|j_t=i\}$,
say $j_{t_{i,1}}=\ldots=j_{t_{i,k_i}}=i$, then we specialize
$x_{i,r}\mapsto z^{(t_{i,r})}_{j_{t_{i,r}}}$ in $\Psi(x)$.
\end{Rem}

In what follows, we use the convention that
\begin{equation}\label{distribution}
  \frac{1}{z-w} \ \mathrm{represents\ the\ series}\
  \sum_{m=0}^\infty z^{-m-1}w^m.
\end{equation}

For $1\leq j\leq i\leq n-1$, consider a graph $Q_{j,i}$ whose vertices
are labeled by $j,j+1,\ldots,i$ and the vertices $k,k+1$ ($j\leq k<i$)
are connected by a single edge. Let $\Or_{j,i}$ denote the set of all
orientations $\pi$ of $Q_{j,i}$. Evoking~(\ref{distribution}), for
$\pi\in \Or_{j,i}$ and $j\leq k<i$, define $\zeta^{-1}_{\pi,k}(z,w)$ via
\begin{equation}\label{ordered zeta}
  \zeta^{-1}_{\pi,k}(z,w):=
   \begin{cases}
     (z-w)\cdot\frac{1}{z-\vv w}, & \text{if } k\rightarrow k+1\ \text{in } \pi \\
     \vv(z-w)\cdot\frac{1}{w-\vv z}, & \text{if } k\leftarrow k+1\ \text{in } \pi
   \end{cases}.
\end{equation}

Simplifying all $[a,b]_\vv$ as $ab-\vv ba$ in~(\ref{higher root generating series}),
thus expressing the latter as a sum of $2^{i-j}$ terms,
Lemma~\ref{n>2 general pairing 1} implies the formula for the
new Drinfeld pairing with $\wt{f}_{j;i}(z_j,\ldots,z_i)$:

\begin{Lem}\label{n>2 general pairing 2}
For $x\in U^>_\vv(L\ssl_n)[\unl{k}]$ and $1\leq j\leq i<n$,
we have
\begin{equation}\label{n>2 root pairing}
  \varphi\left(x,\wt{f}_{j;i}(z_j,\ldots,z_i)\right)=
  \frac{\delta_{\unl{k},1_j+\ldots+1_i}}{(\vv-\vv^{-1})^{i-j}}
  \cdot \Psi(x)_{\mid x_{k,1}\mapsto z_{k}} \cdot
  \sum_{\pi\in \Or_{j,i}} \prod_{j\leq k<i} \zeta^{-1}_{\pi,k}(z_k,z_{k+1}).
\end{equation}
\end{Lem}

\begin{Rem}
The denominator of $\Psi(x)_{\mid x_{k,1}\mapsto z_{k}}$
is canceled by the numerators of $\zeta^{-1}_{\ast,\ast}$-factors.
\end{Rem}

\begin{Cor}\label{pairing with higher roots}
If
  $\varphi\left(x,\wt{f}_{j;i}(z_j,\ldots,z_i)\right)\in
   \BC[\vv,\vv^{-1}][[z_j^{\pm 1},\ldots,z_i^{\pm 1}]]$,
then $\phi_{\unl{d}}(\Psi(x))$ is divisible by $(\vv-\vv^{-1})^{i-j}$,
where $\phi_{\unl{d}}$ is the specialization map~(\ref{specialization map})
with $\unl{d}=\{d_\beta\}, d_\beta=\delta_{\beta,\alpha_j+\ldots+\alpha_i}$.
\end{Cor}

\begin{proof}
Due to~(\ref{n>2 degree zero pairing}), we may assume that
$x\in U^>_\vv(L\ssl_n)[1_j+\ldots+1_i]$, so that
\begin{equation}\label{x-to-p}
  \Psi(x)=\frac{p(x_{j,1},\ldots,x_{i,1})}{(x_{j,1}-x_{j+1,1})\cdots (x_{i-1,1}-x_{i,1})}
  \ \mathrm{with}\ p\in \BC(v)[x_{j,1}^{\pm 1},\ldots,x_{i,1}^{\pm 1}].
\end{equation}

First, let us assume that
  $p(x_{j,1},\ldots,x_{i,1})=x_{j,1}^{a_j}\cdots x_{i,1}^{a_i}$.
Pick sufficiently small integers $r_{j+1},\ldots,r_i\ll 0$,
so that $a_k+r_k < 0$ for $j<k\leq i$.
Then, evaluating the coefficient of $\prod_{k=j+1}^i z_k^{-r_k}$ in
the right-hand side of~(\ref{n>2 root pairing}), we get a nonzero
contribution only from $\pi\in \Or_{j,i}$ with $k\rightarrow k+1$
for all $j\leq k<i$. Moreover, the corresponding contribution equals
%%%%%%%%%%%%%%%%%%%%%%%%%%%%%%%%%%%% DETAILED COMPUTATION %%%%%%%%%%%%%%%%%%%%%%%%%%%%%%%%%%%%%
%\begin{equation}\label{key pairing 2}
%\begin{split}
%  & (\vv-\vv^{-1})^{i-j+1}
%    \varphi\left(\Psi^{-1}\left(\frac{x_{j,1}^{a_j}\cdots x_{i,1}^{a_i}}{(x_{j,1}-x_{j+1,1})\cdots (x_{i-1,1}-x_{i,1})}\right),f_j(z)f_{j+1,r_{j+1}}\cdots f_{i,r_i}\right)=\\
%  & \delta_{b_i\leq 0}\vv^{-b_i}\cdot \delta_{b_i\leq 0}\vv^{-b_i-b_{i-1}}\cdots
%    \delta_{b_i+\ldots+b_{j+1}\leq 0}\vv^{-b_i-\ldots-b_{j+1}}\cdot
%    z^{(a_j+\ldots+a_i)+(r_{j+1}+\ldots+r_i)-(i-j)}=\\
%  & \delta_{b_i\leq 0}\cdot \delta_{b_i+b_{i-1}\leq 0}\cdots \delta_{b_i+\ldots+b_{j+1}\leq 0}\cdot \vv^A z^B\cdot
%    \left(z^{a_j}(\vv^{-1}z)^{a_{j+1}}\cdots (\vv^{j-i}z)^{a_i}\right),
%\end{split}
%\end{equation}
%%%%%%%%%%%%%%%%%%%%%%%%%%%%%%%%%%%%%%%%%%%%%%%%%%%%%%%%%%%%%%%%%%%%%%%%%%%%%%%%%%%%%%%%%%%%%%
\begin{equation}\label{key specialization monomial}
  (\vv-\vv^{-1})^{j-i}\vv^{\sA} z_j^{\sB}\cdot \left(z_j^{a_j}(\vv^{-1}z_j)^{a_{j+1}}\cdots (\vv^{j-i}z_j)^{a_i}\right)
\end{equation}
with
\begin{equation}\label{AB powers}
  \sA=\sum_{j<k\leq i} (j-k)(r_k-1+\delta_{k,i}),\
  \sB=\sum_{j<k\leq i} (r_k-1).
\end{equation}

Note that $\sA,\sB$ of~(\ref{AB powers}) are actually independent of $a_j,\ldots,a_i$.
Thus, for any $x$ as above and the associated Laurent polynomial $p$ of~(\ref{x-to-p}),
comparing the coefficients of $\prod_{k=j+1}^i z_k^{-r_k}$ in~(\ref{n>2 root pairing})
for sufficiently small $r_{j+1},\ldots,r_i\ll 0$, we obtain
\begin{equation}\label{key specialization}
\begin{split}
  & \varphi\left(x,(\vv-\vv^{-1})[\cdots[f_{j}(z),f_{j+1,r_{j+1}}]_\vv,,\cdots,f_{i,r_i}]_\vv\right)=\\
  & (\vv-\vv^{-1})^{j-i}\cdot \vv^{\sA} z^{\sB} \cdot p(z,\vv^{-1}z,\ldots,\vv^{j-i}z).
\end{split}
\end{equation}
Combining~(\ref{key specialization}) and the definition of $\phi_{\unl{d}}$ with
  $\unl{d}=\{d_\beta\}, d_\beta=\delta_{\beta,\alpha_j+\ldots+\alpha_i}$,
~(\ref{specialization map}), we see that $\phi_{\unl{d}}(\Psi(x))$ is
indeed divisible by $(\vv-\vv^{-1})^{i-j}$. This completes our proof of
Corollary~\ref{pairing with higher roots}.
\end{proof}

Combining~(\ref{n>2 root pairing}) with the shuffle
formulas~(\ref{coproduct Cartan},~\ref{coproduct shuffle}) for the
new Drinfeld coproduct $\Delta$ and the property~(\ref{Hopf pairing 1}),
we obtain the formula for the pairing with
$\prod_{r=1}^N \wt{f}_{j_r;i_r}(z^{(r)}_{j_r},\ldots,z^{(r)}_{i_r})$:

\begin{Lem}\label{n>2 general pairing 3}
For $x\in U^>_\vv(L\ssl_n)[\sum_{r=1}^N\sum_{k=j_r}^{i_r} 1_k]$, we have
\begin{equation}\label{n>2 pairing general}
\begin{split}
  & \varphi\left(x,\wt{f}_{j_1;i_1}(z^{(1)}_{j_1},\ldots,z^{(1)}_{i_1})\cdots\wt{f}_{j_N;i_N}(z^{(N)}_{j_N},\ldots,z^{(N)}_{i_N})\right)=
    \prod_{r<s}\prod_{j_r\leq k\leq i_r}^{j_s\leq \ell\leq i_s} \zeta^{-1}_{k,\ell}(z^{(r)}_{k}/z^{(s)}_{\ell})\times\\
  & (\vv-\vv^{-1})^{\sum_{r=1}^N (j_r-i_r)}\cdot \Psi(x)_{\mid x_{i,r}\mapsto z^{(\ast)}_{i}}\cdot
    \prod_{r=1}^N \left(\sum_{\pi_r\in \Or_{j_r,i_r}} \prod_{j_r\leq k<i_r} \zeta^{-1}_{\pi_r,k}(z^{(r)}_{k},z^{(r)}_{k+1})\right)
\end{split}
\end{equation}
with the factors $\zeta^{-1}_{k,\ell}(z^{(r)}_{k}/z^{(s)}_{\ell})$ expanded
in the non-negative powers of $z^{(s)}_{\ell}/z^{(r)}_{k}$.
\end{Lem}

\begin{Rem}\label{explanation 2}
The specialization $\Psi(x)_{\mid x_{i,r}\mapsto z^{(\ast)}_{i}}$
in~(\ref{n>2 pairing general}) should be understood as follows.
For each $i\in I$, there are $k_i=\#\{1\leq t\leq N|j_t\leq i\leq i_t\}$
variables $\{x_{i,r}\}_{r=1}^{k_i}$ (``of color $i$'') featuring in $\Psi(x)$.
If $1\leq t_{i,1}<\ldots<t_{i,k_i}\leq N$ denote the corresponding indices,
such that $j_{t_{i,r}}\leq i\leq i_{t_{i,r}}$, then we specialize
$x_{i,r}\mapsto z^{(t_{i,r})}_{i}$ in $\Psi(x)$, cf.~Remark~\ref{explanation 1}.
\end{Rem}

Since the proof of Lemma~\ref{n>2 general pairing 3} is entirely analogous
to that of Lemma~\ref{n=2 general pairing}, we leave details to the interested reader.
Similar to Corollary~\ref{pairing with higher roots}, we obtain the following result:

\begin{Cor}\label{pairing with products of higher roots}
If
  $\varphi\left(x,\wt{f}_{j_1;i_1}(z^{(1)}_{j_1},\ldots,z^{(1)}_{i_1})\cdots\wt{f}_{j_N;i_N}(z^{(N)}_{j_N},\ldots,z^{(N)}_{i_N})\right)$
is a $\BC[\vv,\vv^{-1}]$-valued Laurent polynomial in
$\{z^{(r)}_i\}_{1\leq r\leq N}^{j_r\leq i\leq i_r}$, then
$\phi_{\unl{d}}(\Psi(x))$ is divisible by $(\vv-\vv^{-1})^{\sum_{r=1}^N (i_r-j_r)}$,
where $\phi_{\unl{d}}$ is the specialization map~(\ref{specialization map}) with
$\unl{d}=\{d_\beta\},\ d_{\alpha_j+\ldots+\alpha_i}=\#\{1\leq r\leq N|j_r=j, i_r=i\}$.
\end{Cor}

This result, combined with Theorem~\ref{shuffle Grojnowski isomorphism},
implies the inclusion ``$\supseteq$'' in Theorem~\ref{Main Theorem}(a):

\begin{Prop}\label{inclusion one way}
  $\sU^>_\vv(L\ssl_n)\supseteq\{x\in U^>_\vv(L\ssl_n)| \varphi(x,y)\in \BC[\vv,\vv^{-1}]\ \mathrm{for\ all}\ y\in \fU^<_\vv(L\ssl_n)\}$.
\end{Prop}

Thus, it remains to establish the opposite inclusion ``$\subseteq$'' in Theorem~\ref{Main Theorem}(a):

\begin{Prop}\label{inclusion other way}
  $\sU^>_\vv(L\ssl_n)\subseteq\{x\in U^>_\vv(L\ssl_n)| \varphi(x,y)\in \BC[\vv,\vv^{-1}]\ \mathrm{for\ all}\ y\in \fU^<_\vv(L\ssl_n)\}$.
\end{Prop}

\begin{proof}
Our proof will proceed in several steps by reducing to the setup
in which~(\ref{key specialization}) applies.

%%%%%%%%%%%%%%%%%%%%%%%%%%%%%%% FROM OLDER VERSION %%%%%%%%%%%%%%%%%%%%%%%%%%%%%%%%%%%%%%%
%First, we reduce the pairing with monomials in $\wt{f}_{\beta,\unl{r}}$ to the pairing
%with single elements $\wt{f}_{\beta,\unl{r}}$. For the latter purpose, we evoke
%%%%%%%%%%%%%%%%%%%%%%%%%%%%%%%%%%%%%%%%%%%%%%%%%%%%%%%%%%%%%%%%%%%%%%%%%%%%%%%%%%%%%%%%%%

First, evoking the shuffle realization of the subalgebra $\sU^>_\vv(L\ssl_n)$,
Theorem~\ref{shuffle Grojnowski isomorphism}, and of the new Drinfeld coproduct,
formula~(\ref{coproduct shuffle}), we immediately obtain the following result:

\begin{Lem}\label{Grojnowski coproduct closed}
For any $x\in \sU^>_\vv(L\ssl_n)$, we have $\Delta(x)=x'_{(1)}x_{(1)}\otimes x_{(2)}$
in the Sweedler notation (the right-hand side is an infinite sum) with
$x_{(1)},x_{(2)}\in \sU^>_\vv(L\ssl_n)$ and $x'_{(1)}$--a monomial in $\psi^-_{\ast,\ast}$.
\end{Lem}

%%%%%%%%%%%%%%%%%%%%%%%%%%%%%%%%%%%%%%%%%%%%%%%%%%%%%%%%%%%%%%%%%%%%%%%%%%%%%%%%%
%%%%%%%%%%%%%%%%%%%%%%%%%%%%%%%% Explanation %%%%%%%%%%%%%%%%%%%%%%%%%%%%%%%%%%%%
%%%%%%%%%%%%%%%%%%%%%%%%%%%%%%%%%%%%%%%%%%%%%%%%%%%%%%%%%%%%%%%%%%%%%%%%%%%%%%%%%
% The idea is that we can keep the variables corresponding to $x_2$ fixed
% (by choosing the minimal specialization map in $x_2$, while specializing
% the variables of $x_1$ in an arbitrary way. The resulting specialization
% may be viewed as a specialization of the variables in the entire x accordingly.
%%%%%%%%%%%%%%%%%%%%%%%%%%%%%%%%%%%%%%%%%%%%%%%%%%%%%%%%%%%%%%%%%%%%%%%%%%%%%%%%%
%%%%%%%%%%%%%%%%%%%%%%%%%%%%%%%%%%%%%%%%%%%%%%%%%%%%%%%%%%%%%%%%%%%%%%%%%%%%%%%%%
%%%%%%%%%%%%%%%%%%%%%%%%%%%%%%%%%%%%%%%%%%%%%%%%%%%%%%%%%%%%%%%%%%%%%%%%%%%%%%%%%

Combining Lemma~\ref{Grojnowski coproduct closed} with the property~(\ref{Hopf pairing 1}),
it thus suffices to show that given any $x\in \sU^>_\vv(L\ssl_n)[1_j+\ldots+1_i]$,
$x'$--a monomial in $\psi^-_{\ast,\ast}$, and $\unl{r}=(r_j,\ldots,r_i)\in \BZ^{i-j+1}$,
we have
\begin{equation}\label{reduction 1}
  \varphi\left(x'x,\wt{f}_{\alpha_j+\ldots+\alpha_i,\unl{r}}\right)\in \BC[\vv,\vv^{-1}].
\end{equation}
Evoking the property~(\ref{Hopf pairing 1}) once again,
for the proof of~(\ref{reduction 1}) it suffices to establish
\begin{equation}\label{reduction 2}
  \varphi\left(x,\wt{f}_{\alpha_j+\ldots+\alpha_i,\unl{r}}\right)\in \BC[\vv,\vv^{-1}]
\end{equation}
for any $x\in \sU^>_\vv(L\ssl_n)[1_j+\ldots+1_i]$ and
any $\unl{r}=(r_j,\ldots,r_i)\in \BZ^{i-j+1}$.

We shall prove~(\ref{reduction 2}) by induction in $i-j$.
The base case $i=j$ is obvious. Given $x\in \sU^>_\vv(L\ssl_n)[1_j+\ldots+1_i]$,
the validity of~(\ref{reduction 2}) for $\unl{r}=(r_j,\ldots,r_i)$ with
sufficiently small $r_{j+1},\ldots,r_i\ll 0$ is due to~(\ref{key specialization}).
We shall call such $\unl{r}\in \BZ^{i-j+1}$ ``$x$--sufficiently small''.
To establish~(\ref{reduction 2}) for a general $\unl{r}$, we shall apply
the PBWD result of Theorem~\ref{PBWD for RTT integral form}(c1) with the choice
of decompositions $\unl{r}(\beta,r)$ such that $\unl{r}(\alpha_j+\ldots+\alpha_i,r)$
are all ``$x$--sufficiently small''.
Then, combining Theorem~\ref{PBWD for RTT integral form}(c1)
with the $\BZ^I$-grading on $U^<_\vv(L\ssl_n)$, we see that the element
$\wt{f}_{\alpha_j+\ldots+\alpha_i,\unl{r}}$ can be written as a
$\BC[\vv,\vv^{-1}]$--linear combination of
  $\wt{f}_{\alpha_j+\ldots+\alpha_i,\unl{r}(\alpha_j+\ldots+\alpha_i,r)}\ (r\in \BZ)$
and degree $>1$ ordered monomials in $\wt{f}_{\alpha_{j'}+\ldots+\alpha_{i'},\unl{r}'}$
with $j\leq j'\leq i'\leq i$ and $i'-j'<i-j$.
By the above observation,
  $\varphi\left(x,\wt{f}_{\alpha_j+\ldots+\alpha_i,\unl{r}(\alpha_j+\ldots+\alpha_i,r)}\right)\in \BC[\vv,\vv^{-1}]$
for any $r\in \BZ$. Finally, we claim that the pairing of $x$ with degree $>1$ monomials in
$\wt{f}_{\alpha_{j'}+\ldots+\alpha_{i'},\unl{r}'}$ is $\BC[\vv,\vv^{-1}]$-valued.
To see this, apply the above arguments ((\ref{Hopf pairing 1})
and Lemma~\ref{Grojnowski coproduct closed}) again, subsequently reducing
to~(\ref{reduction 2}) with $(j,i)$ replaced by $(j',i')$,
which is established by the induction assumption.

This completes our proof of Proposition~\ref{inclusion other way}.
\end{proof}

Combining Propositions~\ref{inclusion one way},~\ref{inclusion other way},
we get the proof of Theorem~\ref{Main Theorem}(a) for arbitrary $n$.
\end{proof}

\begin{Rem}\label{importance of all decompositions}
The above proof of Theorem~\ref{Main Theorem} is crucially based on our
construction of the entire family of Poincar\'{e}-Birkhoff-Witt-Drinfeld
bases of $\fU_\vv(L\ssl_n)$ for all decompositions $\unl{r}$
(rather than picking the canonical one $\unl{r}^{(0)}$ of~(\ref{simplest decomposition})).
\end{Rem}

\begin{Rem}\label{finite counterpart of DCL}
The finite counterpart of Theorem~\ref{Main Theorem}, where $U_\vv(L\ssl_n)$ is
replaced with $U_\vv(\ssl_n)$ and the new Drinfeld pairing $\varphi$ is replaced
with the Drinfeld-Jimbo pairing, is well-known, see e.g.~\cite[\S3]{dcl}.
In the~\emph{loc.cit.}, this duality is extended to the duality between the
Cartan-extended subalgebras $\sU^{',\geq}(\ssl_n)$ and $\fU^{',\leq}(\ssl_n)$
(resp.\  $\sU^{',\leq}(\ssl_n)$ and $\fU^{',\geq}(\ssl_n)$)), where $'$ is used
to indicate yet enlarged algebras by adding more Cartan elements, see~\cite[Theorem 3.1]{dcl}.
\end{Rem}

\begin{Rem}\label{Opposite duality}
Combining Theorem~\ref{Main Theorem} with~\cite{n},
one can easily derive the opposite dualities:
\begin{equation}\label{opposite 1}
  \fU^>_\vv(L\ssl_n)=\{x\in U^>_\vv(L\ssl_n)|
  \varphi(x,y)\in \BC[\vv,\vv^{-1}]\ \mathrm{for\ all}\ y\in \sU^<_\vv(L\ssl_n)\},
\end{equation}
\begin{equation}\label{opposite 2}
  \fU^<_\vv(L\ssl_n)=\{y\in U^<_\vv(L\ssl_n)|
  \varphi(x,y)\in \BC[\vv,\vv^{-1}]\ \mathrm{for\ all}\ x\in \sU^>_\vv(L\ssl_n)\}.
\end{equation}
Viewing $U_\vv(L\ssl_n)$ as the ``vertical'' subalgebra of the
quantum toroidal algebra $U_{\vv,\bar{\vv}}(\ddot{\gl}_n)$, the results
of~\cite[Lemma 3.14, \S3.34]{n} imply that $\preceq$-ordered PBWD monomials
in $\wt{E}_{\beta,r}$ (resp.\ $\mathsf{E}^{(\ast)}_{\beta,r}$) are dual (up to $(-1)^\ast\vv^\ast$)
to $\preceq^{op}$-ordered PBWD monomials in $\mathsf{F}^{(\ast)}_{\beta,r}$
(resp.\ $\wt{F}_{\beta,r}$) with respect to the new Drinfeld
pairing $\varphi$ of Proposition~\ref{Drinfeld double sln}.
Here, we use the following notations:
\begin{itemize}
\item
  the elements
    $\{\wt{E}_{\alpha_j+\alpha_{j+1}+\ldots+\alpha_i,r},
     \wt{F}_{\alpha_j+\alpha_{j+1}+\ldots+\alpha_i,r}\}_{1\leq j\leq i<n}^{r\in \BZ}$
  are defined via~(\ref{integral higher roots}) for a specific choice
  of the decomposition $\unl{r}=(r_j,r_{j+1},\ldots,r_i)$ with
    $r_k:=\lfloor \frac{r(k-j+1)}{i-j+1} \rfloor - \lfloor \frac{r(k-j)}{i-j+1} \rfloor$,
  cf.~\cite[(3.46, 3.47)]{n} where $\frac{r}{i-j+1}$ is referred to as the ``slope'' of these elements;
\item
  the elements
    $\{\mathsf{E}^{(k)}_{\beta,r},\mathsf{F}^{(k)}_{\beta,r}\}_{\beta\in \Delta^+}^{r\in \BZ,k\in \BN}$
  are the $k$-th divided powers defined via (cf.~(\ref{devided higher root power}))
\begin{equation*}
    \mathsf{E}^{(k)}_{\beta,r}:=
    (\wt{E}_{\beta,r}/(\vv-\vv^{-1}))^{k}/[k]_\vv!
    \ \ \mathrm{and}\ \ 
    \mathsf{F}^{(k)}_{\beta,r}:=
    (\wt{F}_{\beta,r}/(\vv-\vv^{-1}))^{k}/[k]_\vv!
\end{equation*}
\item
  $\preceq^{op}$ is the opposite of the total ordering $\preceq$ on $\Delta^+\times \BZ$,
  the latter being defined via: $(\alpha_j+\ldots+\alpha_i,r)\preceq (\alpha_{j'}+\ldots+\alpha_{i'},r')$
  iff $\frac{r}{i-j+1}<\frac{r'}{i'-j'+1}$ or $\frac{r}{i-j+1}=\frac{r'}{i'-j'+1}$
  and either $i-j<i'-j'$ or $i-j=i'-j'$ and $i\leq i'$.
\end{itemize}
As the new Drinfeld pairing $\varphi$ is non-degenerate and
  $\mathsf{E}^{(k)}_{\beta,r}\in \sU^>_\vv(L\ssl_n),
   \mathsf{F}^{(k)}_{\beta,r}\in \sU^<_\vv(L\ssl_n)$
(note that $\mathsf{E}^{(k)}_{\beta,r}$ is the image of $\se_{\beta,r}^{(k)}$ under
an automorphism of $\sU^>_\vv(L\ssl_n),\ e_{k,t}\mapsto e_{k,t+r_k}$),
we obtain the inclusions ``$\supseteq$'' in~(\ref{opposite 1},~\ref{opposite 2}),
while the opposite inclusions ``$\subseteq$''\textbf{} are implied by Theorem~\ref{Main Theorem}.

These arguments also imply that the above ordered monomials
in $\wt{E}_{\beta,r}, \wt{F}_{\beta,r}, \mathsf{E}^{(\ast)}_{\beta,r}, \mathsf{F}^{(\ast)}_{\beta,r}$
form bases of the free $\BC[\vv,\vv^{-1}]$-modules
$\fU^>_\vv(L\ssl_n),\fU^<_\vv(L\ssl_n),\sU^>_\vv(L\ssl_n),\sU^<_\vv(L\ssl_n)$, respectively.
\end{Rem}

    %%%%%%%%%%%%%%%%%%%%%%%%%%%%%%%%%%%%%%%%%%%%%%%%%%%%%%%%%%%%%%%%%%%%%%%%%
    %%%%%%%%%%%%%%%%%%%%%%%%%%%%%%%%%%%%%%%%%%%%%%%%%%%%%%%%%%%%%%%%%%%%%%%%%
    %%%%%%%%%%%%%%%%%%%%%%%%%%%%%% BIBLIOGRAPHY %%%%%%%%%%%%%%%%%%%%%%%%%%%%%
    %%%%%%%%%%%%%%%%%%%%%%%%%%%%%%%%%%%%%%%%%%%%%%%%%%%%%%%%%%%%%%%%%%%%%%%%%
    %%%%%%%%%%%%%%%%%%%%%%%%%%%%%%%%%%%%%%%%%%%%%%%%%%%%%%%%%%%%%%%%%%%%%%%%%

\end{document}